\documentclass[11pt]{amsart}

\usepackage[a4paper,margin=1in]{geometry}
\usepackage{amsmath,amssymb,amsthm,mathtools}
\usepackage{microtype}
\usepackage{enumitem}
\usepackage{xurl}
\usepackage[colorlinks=true,linkcolor=blue,citecolor=blue,urlcolor=blue]{hyperref}

\numberwithin{equation}{section}

\newtheorem{theorem}{Theorem}[section]
\newtheorem{proposition}[theorem]{Proposition}
\newtheorem{lemma}[theorem]{Lemma}
\newtheorem{constructionlemma}[theorem]{Construction Lemma}
\newtheorem{corollary}[theorem]{Corollary}
\theoremstyle{remark}
\newtheorem{remark}[theorem]{Remark}

\newcommand{\W}{W}

\newcommand{\im}{\operatorname{im}}

\newcommand{\RR}{\mathbf R}

\newcommand{\m}{\mathfrak m}
\newcommand{\Atilde}{\widetilde A}
\newcommand{\Vtilde}{\widetilde V}
\newcommand{\Ktilde}{\widetilde K}
\newcommand{\stackstag}[1]{\href{https://stacks.math.columbia.edu/tag/#1}{Tag~#1}}

\title[Spherical Completeness and Coherence]{Spherical Completeness, Coherence, and GCD Properties of Formal Power Series and Witt Vector Rings}
\author{Yiding Wang}
\date{}

\subjclass[2020]{13F30, 13F25, 13F35, 12J25}
\keywords{coherent ring, GCD domain, valuation ring, spherical completeness, Witt vectors, formal power series, immediate extension}

\begin{document}

\begin{abstract}
Let $K$ be a complete nonarchimedean valued
field with
$v(K^\times)=\RR$, and let $V=\mathcal O_K$. We prove that $K$ is
spherically complete if and only if $V[[T]]$ is coherent, and that this is
also equivalent to $V[[T]]$ being a GCD domain. If $K$ is perfect of
characteristic $p$, the same characterization holds for the Witt vector
ring $\W(V)$. Thus, this settles the previously unresolved full-real-value-group case in the coherence problems for both formal power series and Witt vector rings. 
In particular, this result also gives affirmative answers to Questions~9
and~10 of Anderson--Kang--Park. The proof combines a coherence criterion
for complete rings with a spherically complete valuation quotient and a
uniform construction of non-finitely generated intersections of two
principal ideals from an empty ball chain. 
\end{abstract}

\maketitle

\section{Introduction}

\subsection{Background and history}

Let $K$ be a complete nonarchimedean valued field and let
$V=\mathcal O_K$. If $V$ is a discrete valuation ring, then $V[[T]]$ is a
regular local ring and hence is both coherent and a UFD. The nondiscrete
case is subtler. Recall that a ring is coherent if every finitely
generated ideal is finitely presented.

The study of coherence for power series rings over valuation domains goes
back to J{\o}ndrup and Small \cite{JondrupSmall}. Anderson and Watkins subsequently
proved that if a rank-one valuation domain has value group a proper dense
subgroup of $\RR$, then its power series ring is not coherent
\cite{AndersonWatkins}; see also the recent homological treatment of Jones
\cite{JonesPowerSeries}. Thus, among nondiscrete rank-one valuation
domains, a positive result can occur only when the value group is all of
$\RR$.

The analogous factorization question was studied by
Anderson--Kang--Park. They proved that if $V$ is non-Noetherian and
$V[[T]]$ is a GCD domain, then $V$ has rank one and value group $\RR$.
They also constructed a complete valuation domain of value group $\RR$
whose power series ring is not a GCD domain
\cite[Theorems~1 and~8]{AndersonKangPark}. Their paper ends by asking
whether the valuation ring of the full Hahn field has a GCD power series
ring and, more generally, whether any non-Noetherian positive example
exists \cite[Questions~9 and~10]{AndersonKangPark}.

The same case remained unresolved for Witt vectors. If $K$ is perfect of
characteristic $p$, Kedlaya proved that $\W(V)$ is not coherent whenever
the value group is not isomorphic to $\RR$ as an ordered abelian group
\cite[Theorem~1.2, p.~130]{KedlayaAinf}. His proof treats separately the
case of a proper dense subgroup of $\RR$ and the case of a
non-Archimedean value group. The full real value group was left open
\cite[Remark~1.3, p.~131]{KedlayaAinf}.

\subsection{Main result}

Taken together, these results had left the same case unresolved in both
settings: complete valued fields with full real value group. Our main theorem
settles this case completely, identifying spherical completeness as the exact
dividing line.

\begin{theorem}\label{thm:classification}
Let $K$ be complete with $v(K^\times)=\RR$, and let $V=\mathcal O_K$.
\begin{enumerate}[label=\textup{(\roman*)},leftmargin=2em,itemsep=2pt,topsep=3pt]
\item In arbitrary characteristic, the following conditions are
equivalent: $K$ is spherically complete; $V[[T]]$ is coherent; and
$V[[T]]$ is a GCD domain.
\item If $K$ is perfect of characteristic $p$, the following conditions
are equivalent: $K$ is spherically complete; $\W(V)$ is coherent; and
$\W(V)$ is a GCD domain.
\end{enumerate}
\end{theorem}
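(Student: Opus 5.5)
The proof treats both settings in parallel through a single ring $A$, which is either $V[[T]]$ or $\W(V)$. In each case $A$ is complete and separated for an ideal $I$ ($I=(T)$, resp.\ $I=(p)$) with $A/I\cong V$. Each element $f\in A$ has a leading behaviour controlled by $V$: in $V[[T]]$ this is the Gauss content $\min_i v(a_i)$; in $\W(V)$ it is the content of the Teichm\"uller coefficients. I would prove the cycle of implications
\[
K \text{ spherically complete}\;\Longrightarrow\; A \text{ coherent and GCD}, \qquad A \text{ coherent or GCD}\;\Longrightarrow\; K \text{ spherically complete}.
\]
The second implication is shown in contrapositive form.

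\textbf{Positive direction.} Assume $K$ is spherically complete. I would establish a coherence criterion of the following kind. Let $A$ be $I$-adically complete with $I=(t)$ principal and $t$ a nonzerodivisor, and suppose $A/tA$ is a valuation ring with spherically complete fraction field. Then $A$ is coherent, and finite intersections of principal ideals are principal. The route is to show that for $f,g\in A$ the ideal $fA\cap gA$, equivalently the syzygy module $\{(x,y): xf=yg\}$, is finitely generated.

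\begin{enumerate}
\item Reduce modulo $t^n$. Over $A/t^n$, which is a finite successive extension of copies of $V$, modules of syzygies are described by $V$-linear data.
\item Over a maximal (spherically complete) valuation ring $V$, finitely generated submodules of finite free modules behave well. Maximal valuation rings are linearly compact, so finitely presented modules are direct sums of cyclics, and projective limits of nonempty solution sets of linear systems are nonempty.
\item Pass to the limit $n\to\infty$ using linear compactness. An inverse system of cosets of submodules has nonempty intersection, so an approximate generator of $fA\cap gA$ lifts to an honest one.
\end{enumerate}

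Applied to $fA\cap gA$, this yields a single generator, which is a least common multiple. Since $A$ is a domain, existence of all LCMs gives the GCD property. Coherence of a domain is equivalent to finite generation of intersections of two principal ideals together with finite generation of $(I:a)$-type ideals. Here it suffices, by Chase-type criteria for domains, that intersections of pairs of finitely generated ideals are finitely generated, and this follows from the principal case by an induction on the number of generators using the same limit argument.

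\textbf{Negative direction.} Suppose $K$ is not spherically complete. Take a decreasing chain of balls $B(c_n,r_n)$ with empty intersection; then $r_n\downarrow r>0$, since $K$ is complete. Because $v(K^\times)=\RR$, there are elements $\pi_n$ with $v(\pi_n)=r_n$ and an element of valuation exactly $r$. I would then build two explicit elements $f,g\in A$ whose ``divisibility data'' encode the chain. For example, take
\[
f=T-c,\qquad g=\textstyle\sum_n \pi_n \cdots T^{n}
\]
(with Witt analogues using Teichm\"uller lifts $[c_n]$ and powers of $p$), arranged so that a common multiple $h$ of $f$ and $g$ corresponds to a point near the centres $c_n$ at each level. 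The claim is that $fA\cap gA$ is not finitely generated. A generator would have to produce a center lying in all balls, which contradicts emptiness, while the elements built from truncations give an infinite strictly increasing chain of principal subideals. Non-principality of $fA\cap gA$ shows that no LCM exists, so $A$ is not GCD. Non-finite generation shows that $A$ is not coherent, since in a coherent domain the intersection of two principal ideals is finitely generated.

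\textbf{Main obstacle.} I expect the hard part to be this uniform construction in the negative direction, making it work identically for $V[[T]]$ and $\W(V)$ with carries. The first attempt would be to choose the $c_n$ with pseudo-limit behaviour and to define $g$ through the truncations $c_n$. The task is then to verify, via Newton-polygon or content estimates in the $\W(V)$ case, that any element of $fA\cap gA$ with minimal content must determine a pseudo-limit of $(c_n)$.
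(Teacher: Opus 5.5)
\textbf{Negative direction.} This part has no working construction, and the sample you give cannot be made to work. If $c\in V$, then $f=T-c$ is either a unit or, for $c\in\m_V$, a prime, because evaluation at $T=c$ identifies $V[[T]]/(T-c)$ with $V$. Either way $(f)\cap(g)$ is $(g)$ or $(fg)$ for every $g$, so it is always principal. If instead $c$ is meant to be a pseudo-limit of the centres, then $c\notin K$ and $f\notin A$. A single linear factor over $V$ cannot see the missing point.

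The paper's key idea is to work over a maximal immediate extension $\Ktilde$, where the empty chain acquires a point. It normalizes the chain into data $a_i\in\m_V$ with $v(a_i)=\alpha_i$ decaying so fast that $\sum_i r^i\alpha_i<\infty$ for every $r$. It then shows that $\bigcap_N\bigl(\prod_{i\le N}(a_i-T)\bigr)$ in $\Vtilde[[T]]$ is generated by some $F=c+b_1T+\cdots$ whose linear coefficient lies in a coset $b_1+c\Vtilde$ disjoint from $K$.

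Next, $f$ and $g$ are scaled convergent infinite products $c_1\prod_i(1-(T/a_i)^{N_i})$ and $c_2\prod_i(1-(T/a_i)^{M_i})$ with $\gcd(N_i,M_i)=1$. These are chosen so that $F$ is a gcd of $f,g$ over $\Vtilde[[T]]$. Any gcd $H$ over $V[[T]]$ would then equal $uF$ with $u$ a unit, normalized to $u(0)=1$, forcing $H_1=b_1+cu_1\in K\cap(b_1+c\Vtilde)=\varnothing$.

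For $\W(V)$ the factors $[a_i]-p$ require a genuinely new input: their primality, via Kedlaya--Liu's description of $\W(\Vtilde)/([a_i]-p)$ as a valuation domain. This is what drives the multiplicity count, so ``Newton-polygon or content estimates'' are not a substitute. Finally, passing from ``no gcd'' to ``$(f)\cap(g)$ not finitely generated'' needs $(f)\cap(g)$ to be $\pi$-saturated, which holds because $f,g\not\equiv 0\bmod \pi$. Only then does finite generation force principality.

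\textbf{Positive direction.} Your outline matches the paper's: coherence of $A/t^n$, nested affine cosets meet, pass to the limit. But the criterion you state drops the hypothesis $v(F^\times)=\RR$, and without it the criterion is false. The Hahn field $k((t^{\mathbb Q}))$ is spherically complete, yet the power series ring over its valuation ring is neither coherent (Anderson--Watkins) nor GCD (Anderson--Kang--Park). So a step is missing, and it is exactly where the value group enters.

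Linear compactness only shows that the reduction modulo $t$ of the global solution module equals the descending intersection $\bigcap_r\Lambda_r$ of the images of the level-$r$ solution modules. You still need this intersection to be finitely generated. With value group $\mathbb Q$, the generators' valuations can increase to an irrational limit, and the intersection is then not finitely generated. With value group $\RR$, the paper proves finite generation in Lemma~\ref{lem:intersection-finiteness}, using $\operatorname{Hom}_V(\m,V)=V$ and the Bhatt--Scholze criterion. It then concludes by $\pi$-saturation of the kernel and the complete Nakayama lemma. The paper also runs this argument for the kernel of an arbitrary matrix $\Phi$, which gives coherence directly rather than by induction from intersections of principal ideals.
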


\begin{samepage}
\begin{remark}
In particular, Theorem~\ref{thm:classification} answers Questions~9
and~10 of Anderson--Kang--Park affirmatively
\cite[Questions~9 and~10]{AndersonKangPark}. For Question~9, the full Hahn
field $S(\RR;\kappa)$ considered there is maximally complete
\cite[Theorem~1]{PoonenMaximal}, and therefore spherically complete
\cite[Theorem~4]{KaplanskyMaximal}. If $V_H$ denotes its valuation ring,
Theorem~\ref{thm:classification}\textup{(i)} shows that $V_H[[T]]$ is a
coherent GCD domain. Since its value group is $\RR$, the ring $V_H$ is
non-Noetherian. It consequently also supplies the positive
non-Noetherian example requested in Question~10.
\end{remark}
\end{samepage}

\subsection{Strategy and organization}

Section~\ref{sec:affine-cosets} records affine-coset intersection and
$\pi$-saturation results used throughout. Section~\ref{sec:kernel-lifting}
uses them to lift finite generation of kernels from $V$ to $W(V)$ or $V[[T]]$,
giving the positive implications. Section~\ref{sec:valuation-data}
extracts a normalized missing affine coset from an empty ball chain.
Section~\ref{sec:power} uses it to prove the converse for the power series case,
following Anderson--Kang--Park \cite{AndersonKangPark}, and
Section~\ref{sec:witt} adapts the construction to Witt vectors.

\section{Affine cosets and \texorpdfstring{$\pi$}{pi}-saturated ideals}
\label{sec:affine-cosets}

This section collects the lemmas used in both directions of the proof:
intersection results over spherically complete valuation domains and
principalization results for $\pi$-saturated ideals in complete rings.

All rings are commutative with identity. For $\pi\in A$ acting injectively
on an $A$-module $M$, a submodule $N\subseteq M$ is
\emph{$\pi$-saturated} if $N\cap\pi M=\pi N$, equivalently, if
$\pi x\in N$ implies $x\in N$.

We use the standard equivalent
characterizations of coherence: kernels of maps between finite free
modules are finitely generated, and finitely generated submodules of
finite free modules are finitely presented. We also use the resulting
fact that intersections of finitely generated submodules of a finite
free module are finitely generated
\cite[\stackstag{05CU}]{StacksProject}. A GCD domain is a domain in which
every pair of nonzero elements has a greatest common divisor.

We call a valuation ring $V$ \emph{$\RR$-valued complete} if its fraction field
$K$ is complete and the chosen valuation $v\colon K^\times\to\RR$ is surjective.

Every finitely generated torsion-free module over a valuation domain is
finite free: torsion-free modules are flat
\cite[\stackstag{0539}]{StacksProject}, and finite flat modules over local
rings are finite free \cite[\stackstag{00NZ}]{StacksProject}. If $M$ is finite free over $V$, call a subset $x+L$ an \emph{affine coset}
if $L\subseteq M$ is finitely generated, and call $L$ its \emph{direction}. Such cosets are convex in the
sense of \cite[Proposition~2.10]{ChernikovMennen}; the first lemma is therefore
a special case of \cite[Lemma~3.4]{ChernikovMennen}.

\begin{lemma}\label{lem:affine-compactness}
Let $V$ be a valuation domain whose fraction field $F$ is spherically complete. If $M$ is finite free over $V$ and
\[
Z_1\supseteq Z_2\supseteq Z_3\supseteq\cdots
\]
is a descending sequence of nonempty affine cosets in $M$, then $\bigcap_n Z_n\neq\varnothing$.
\end{lemma}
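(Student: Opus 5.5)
The plan is to argue by induction on the rank $n$ of the finite free module $M\cong V^n$, slicing off one coordinate at a time. For each step I need two facts. First, a finitely generated submodule $L\subseteq M$ is torsion-free, hence finite free, as recorded above. Second, a nonempty affine coset is stable under the relevant operations: linear images of affine cosets are affine cosets, and so are fibres of such images.

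\emph{Base case $n=1$.} A finitely generated submodule of $V$ is a principal ideal $aV$, with $a=0$ allowed. So each $Z_k$ is either a singleton $\{x_k\}$ or a set $x_k+a_kV$ with $a_k\neq 0$. The latter is the closed ball $\{y\in F: v(y-x_k)\ge v(a_k)\}$ of $F$, which lies inside $V$ because it is nested in $Z_1\subseteq V$.
\begin{itemize}
\item If some $Z_k$ is a singleton, then all later $Z_j$ equal that singleton, since they are nonempty and contained in it. That point lies in every $Z_j$, so the intersection is nonempty.
\item Otherwise $(Z_k)$ is a descending chain of closed balls in $F$. Spherical completeness of $F$ gives a common point $c\in F$. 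Since $c\in Z_1\subseteq V$, this point lies in $M$.
\end{itemize}

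\emph{Inductive step.} Write $M=M'\oplus V$ with $M'\cong V^{n-1}$, and let $p\colon M\to V$ be the projection. For $Z_k=x_k+L_k$, the image $p(Z_k)=p(x_k)+p(L_k)$ is an affine coset of $V$, since $p(L_k)$ is finitely generated. These images are nonempty and descending, so by the base case there is a $c\in\bigcap_k p(Z_k)$. Now set $Z_k'=Z_k\cap p^{-1}(c)$. Choose $y_k\in Z_k$ with $p(y_k)=c$; then
\[
Z_k'=y_k+(L_k\cap\ker p),
\]
which is a nonempty coset in $\ker p=M'$ (after translating by a fixed lift of $c$). Its direction is finitely generated by the following argument. $L_k$ is finite free, and $p(L_k)\subseteq V$ is principal, hence free. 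Therefore $0\to L_k\cap\ker p\to L_k\to p(L_k)\to 0$ splits, so $L_k\cap\ker p$ is a direct summand of a finite free module and is finitely generated. The sequence $(Z_k')$ is descending because $(Z_k)$ is. The induction hypothesis applied in $M'$ gives a point in $\bigcap_k Z_k'\subseteq\bigcap_k Z_k$.

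\emph{Main obstacle.} The only delicate point is making the reduction preserve the class of affine cosets. The fibre over $c$ must have a finitely generated direction, which the splitting argument above supplies. The fibre must also be nonempty for every $k$ simultaneously, which is why $c$ is chosen in the full intersection of the projected balls before slicing. Degenerate balls of radius zero are handled by the stabilization remark in the base case.
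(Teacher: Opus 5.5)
Your proof is correct, but it takes a different route from the paper. The paper does not argue directly. It observes that affine cosets are convex in the sense of \cite[Proposition~2.10]{ChernikovMennen} and obtains the lemma as a special case of \cite[Lemma~3.4]{ChernikovMennen}.

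You instead give a self-contained induction on the rank of $M$:
\begin{enumerate}[label=(\alph*)]
\item Project onto one coordinate.
\item Use spherical completeness in dimension one to choose $c$ in every projected coset $p(Z_k)$ simultaneously.
\item Pass to the fibres $Z_k\cap p^{-1}(c)$.
\end{enumerate}
The essential check is that the fibre direction $L_k\cap\ker p$ is still finitely generated. Your splitting argument handles this correctly, since $p(L_k)$ is a principal ideal of the domain $V$ and hence free. You only need $p(L_k)$ to be projective here, not $L_k$ to be finite free.

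What your route buys is elementarity. It uses only two facts: finitely generated ideals of $V$ are principal, and nested balls in $F$ meet. It works verbatim for chains of arbitrary length, not just sequences. It also makes visible that neither rank one nor $v(F^\times)=\RR$ is needed, which matches the lemma's hypotheses.

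The citation buys brevity and places the statement within a general framework of convex subsets of $F^n$. That generality is not needed anywhere in the paper, since only the case of finitely generated directions and countable descending sequences is used, in Proposition~\ref{prop:linear-compactness}.
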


\begin{lemma}\label{lem:intersection-finiteness}
Let $V$ be a valuation domain with spherically complete fraction field
$F$ and value group $v(F^\times)=\RR$. If $M$ is finite free, then the
intersection of any nonempty family of finitely generated $V$-submodules
of $M$ is finite free.
\end{lemma}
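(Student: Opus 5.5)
Let $N=\bigcap_{i\in I}N_i$ with each $N_i\subseteq M$ finitely generated. Since $V$ is a valuation domain and $N\subseteq M$ is torsion-free, it suffices to show that $N$ is finitely generated; it is then finite free by the remark preceding Lemma~\ref{lem:affine-compactness}. I will induct on the rank $n$ of $M\cong V^n$. The case $n=0$ is trivial.

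\textbf{Rank one.} Here each $N_i$ is a principal ideal $a_iV$, possibly zero. If some $N_i=0$, then $N=0$. Otherwise set $r=\sup_i v(a_i)\in(-\infty,\infty]$. Because $v(F^\times)=\RR$, whenever $r<\infty$ we can choose $b\in V$ with $v(b)=r$. Then $N$ is either $bV$ or the ideal $\{x: v(x)>r\}$, and the remaining task is to rule out the second possibility, and also the case $r=\infty$ with $N=0$.

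The fact I will use is that for an $\RR$-valued valuation ring, a descending intersection of principal ideals is always a principal ideal or zero. Indeed, $\{x: v(x)\ge r\}=bV$ exactly when the supremum is attained. If it is not attained, the intersection is $\{x:v(x)\ge r\}$, which is again $bV$. The possible problem is strictness: $\bigcap_i a_iV$ consists of all $x$ with $v(x)\ge v(a_i)$ for every $i$, that is, with $v(x)\ge r$. So the intersection equals $bV$, and the open-ball case never arises. Hence rank one holds using only $v(F^\times)=\RR$.

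\textbf{Inductive step.} Write $M=M'\oplus Ve_n$ with projection $p\colon M\to V$. The plan is to control both $p(N)$ and $N\cap M'$.

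First, $N\cap M'=\bigcap_i(N_i\cap M')$. Each $N_i\cap M'$ is the kernel of $N_i\to V$. Since $N_i$ is finite free and its image in $V$ is a principal ideal, hence free, this kernel is a direct summand of $N_i$ and so is finitely generated. Induction then makes $N\cap M'$ finite free.

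Second, I claim that $p(N)$ is a nonzero-or-zero ideal that is principal. Here spherical completeness enters, through Lemma~\ref{lem:affine-compactness}. It is enough to show that $p(N)=\bigcap_J p\bigl(\bigcap_{i\in J}N_i\bigr)$ over finite $J\subseteq I$, together with the fact that an intersection of principal ideals is principal (the rank-one case). Finite intersections of the $N_i$ are finitely generated: inductively, because an intersection of two finitely generated submodules of $V^n$ over a valuation domain is finitely generated, valuation domains being coherent.

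For the inclusion $\supseteq$, take $t$ in the right-hand side. The fibres $Z_J=\{x\in\bigcap_{i\in J}N_i : p(x)=t\}$ are nonempty affine cosets, with direction $\bigcap_{J}N_i\cap M'$, which is finitely generated. To apply Lemma~\ref{lem:affine-compactness} I need a countable chain. I expect this reduction from an arbitrary family to a countable descending chain to be the main obstacle. My first attempt will use the real value group: the ideal $\bigcap_i N_i$ should be determined by countably many $i$, via approximating suprema of valuations by sequences coordinatewise. Failing that, I will instead invoke the general form \cite[Lemma~3.4]{ChernikovMennen} for arbitrary chains of convex sets, together with the finite intersection property, applied through Zorn's lemma to the directed family $\{Z_J\}$.

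Once $p(N)=cV$ is principal, choose $x\in N$ with $p(x)=c$. Then $N=Vx+(N\cap M')$ is finitely generated, which completes the induction.
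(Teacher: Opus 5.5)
Your rank-one case and your treatment of $N\cap M'$ are fine. The proof has a genuine gap, though, exactly where you flag it. You need $\bigcap_J Z_J\neq\varnothing$ for a directed family indexed by the finite subsets $J\subseteq I$. Lemma~\ref{lem:affine-compactness} only handles descending sequences, and neither of your two routes closes this.

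In route (a), you could realize $\sup_J v(p(N_J))$ along a countable chain $J_1\subseteq J_2\subseteq\cdots$ and apply Lemma~\ref{lem:affine-compactness}. That only yields a point of $\bigcap_k N_{J_k}$ lying over $t$, and this point need not lie in $N$. Also, ``coordinatewise'' approximation has no meaning for submodules that are not products of ideals. Proving that $N$ is already a countable sub-intersection is a real additional argument, which you do not supply. (It can be done, e.g.\ by first settling the countable case and then excluding strictly decreasing $\omega_1$-chains of finite free submodules via ranks and real-valued determinant valuations.)

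In route (b), passing from chains to directed families by a minimal-cardinality or Zorn argument needs a class of sets closed under intersections of chains. A chain of cosets with finitely generated directions intersects in a coset whose direction is an infinite intersection of finitely generated modules. That direction is finitely generated only by the lemma you are proving, so the argument is circular. One could instead cite linear compactness of finitely generated modules over a maximal valuation ring, but that is a stronger input than Lemma~\ref{lem:affine-compactness}.

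The missing idea is the one in the paper's proof. Since $\operatorname{Hom}_V(\mathfrak m,V)=V$ when $v(F^\times)=\RR$, each finite free $N_i$ satisfies $N_i=\{y\in F^n:\mathfrak m y\subseteq N_i\}$, and hence so does $N$. This is an intrinsic property of $N$ that can be tested with countably many elements, which is what lets Lemma~\ref{lem:affine-compactness} suffice.

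In your induction, extend $p$ $F$-linearly to $F^n$. It is enough to show that $\mathfrak m t\subseteq p(N)$ implies $t\in p(N)$, because an ideal of $V$ with this property is $0$ or $\{x: v(x)\geq r\}=bV$. To see this:
\begin{enumerate}[label=\textup{(\arabic*)}]
\item Choose $\epsilon_k\in\mathfrak m$ with $v(\epsilon_k)$ strictly decreasing to $0$, and put $S_k=\{y\in F^n: p(y)=t,\ \epsilon_ky\in N\}$.
\item These sets are nonempty (because $\epsilon_kt\in p(N)$), descending, and contained in $\epsilon_1^{-1}V^n$.
\item Each $S_k$ is a coset of $\epsilon_k^{-1}(N\cap M')$, which is finitely generated by your induction.
\item Lemma~\ref{lem:affine-compactness} gives $y\in\bigcap_kS_k$. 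Then $\mathfrak m y\subseteq N$, so $y\in N$ and $t=p(y)\in p(N)$.
\end{enumerate}
With this replacement your outline becomes a complete proof. It is essentially an unpacking of the Bhatt--Scholze criterion that the paper invokes after establishing $\operatorname{Hom}_V(\mathfrak m,L)=L$.
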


\begin{proof}
Choose $M\cong V^d$, let $L=\bigcap_{i\in I}L_i$, and denote the maximal
ideal of $V$ by $\m$. Under the natural identification
$\operatorname{Hom}_V(\m,F)=F$, one has
$\operatorname{Hom}_V(\m,V)=V$: if $x\m\subseteq V$ and $v(x)<0$,
choose $a\in\m$ with $0<v(a)<-v(x)$, a contradiction. Each $L_i$ is
finite free, so, inside $F^d$,
\[
\operatorname{Hom}_V(\m,L)
=\bigcap_{i\in I}\operatorname{Hom}_V(\m,L_i)
=\bigcap_{i\in I}L_i=L.
\]
Since $L\subseteq V^d$ is bounded, Bhatt--Scholze's criterion
\cite[Lemma~6.14]{BhattScholzeWittGrassmannian} shows that $L$ is finite
free.
\end{proof}

The following lemma concerns $\pi$-saturated ideals in complete rings;
its second part is the argument underlying
\cite[Lemma~3]{AndersonKangPark}.

\begin{lemma}\label{lem:adic-basics}
Let $A$ be complete and separated for the $(\pi)$-adic topology, with
$\pi$ a nonzerodivisor and $A/\pi$ a valuation domain.

\noindent\textup{(i)}
If $H\bmod\pi\neq0$, then
\[
(H)\cap\pi^rA=\pi^r(H)\qquad(r\geq1).
\]
Consequently, $(H)$ is $\pi$-saturated and $\pi$-adically closed, and
division by $H$ on $(H)$ is continuous.

\noindent\textup{(ii)}
Let $J\subseteq A$ be $\pi$-saturated. If the image of $J$ in $A/\pi$ is
generated by the image of some $f\in J$, then $J=(f)$. In particular,
every finitely generated $\pi$-saturated ideal of $A$ is principal.
\end{lemma}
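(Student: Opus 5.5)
For (i), I will argue by induction on $r$. Let $g\in(H)\cap\pi^rA$ and write $g=Hx$. Since $A/\pi$ is a domain (a valuation domain), the image of $H$ in $A/\pi$ is a nonzero element of a domain, so it is a nonzerodivisor there. Hence $\pi$ does not divide $H$, and $Hx\in\pi A$ forces $x\in\pi A$. Write $x=\pi x_1$. Because $\pi$ is a nonzerodivisor, from $\pi Hx_1\in\pi^rA$ we get $Hx_1\in\pi^{r-1}A$. Induction then gives $x\in\pi^rA$, so $g\in\pi^r(H)$.

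Several consequences follow from this.
\begin{itemize}
\item The case $r=1$ is exactly $\pi$-saturation of $(H)$.
\item Since $H$ is a nonzerodivisor on $A$ (if $Hx=0$ then $x\in\bigcap\pi^rA=0$ by separatedness), division $Hx\mapsto x$ is well defined. It is continuous because $Hx\in\pi^rA$ implies $x\in\pi^rA$.
\item For closedness, let $Hx_n\to g$. Then $(Hx_n)$ is Cauchy, so $H(x_n-x_m)\in\pi^rA$ for large $n,m$, and therefore $x_n-x_m\in\pi^rA$. Thus $(x_n)$ is Cauchy and converges by completeness to some $x$. Then $Hx_n\to Hx$, and separatedness gives $g=Hx\in(H)$.
\end{itemize}

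For (ii), let $\bar f$ generate $\bar J$. If $\bar J=0$, then $J\subseteq\pi A$, and $\pi$-saturation gives $J=\pi J$. Iterating, $J\subseteq\bigcap_r\pi^rA=0$, so $J=0$. We may choose $f=0$ in that case, or note that $f\in J=0$, so $(f)=J$ either way. Assume now $\bar f\ne0$. By (i), $(f)$ is closed and $\pi$-saturated.

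Given $g\in J$, write $g=a_0f+\pi g_1$. Then $\pi g_1\in J$, so saturation gives $g_1\in J$. Repeating the step produces $g=(a_0+\pi a_1+\cdots+\pi^{r-1}a_{r-1})f+\pi^rg_r$ with $g_r\in J$. The series $a=\sum\pi^ia_i$ converges by completeness, and $g-af\in\pi^rA$ for every $r$. By separatedness $g=af$.

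For the "in particular" clause, let $J=(h_1,\dots,h_n)$ be $\pi$-saturated. Its image $\bar J$ is a finitely generated ideal of a valuation domain, hence principal. It is generated by $\bar h_i$ for the index $i$ whose image divides all the others, since ideals of a valuation ring are totally ordered. Taking $f=h_i$, the first part gives $J=(f)$.

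The main subtlety is the division step in (i), which is where we need that $A/\pi$ is a domain. Beyond that, we must check that the iterative decomposition in (ii) converges, and this only uses completeness and separatedness.
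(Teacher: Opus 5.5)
Your proof is correct and follows the paper's argument essentially step for step. For (i) you use inductive cancellation of $\pi$, relying on $\overline H$ being a nonzerodivisor in the domain $A/\pi$, and get closedness and continuity of division from Cauchy quotients; for (ii) you use the saturation-driven iteration $g=(\sum_{i<r}\pi^ia_i)f+\pi^rg_r$ together with completeness and separatedness. The only differences are cosmetic: your separate treatment of $\bar J=0$ and the appeal to (i) inside (ii) are unnecessary, since the iteration works verbatim even when $\bar f=0$.
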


\begin{proof}
For \textup{(i)}, reduction modulo $\pi$ and cancellation of $\pi$ show
inductively that $Hy\in\pi^rA$ implies $y\in\pi^rA$. This proves the
displayed equality and saturation; separatedness also shows that $H$ is
regular. Moreover, a Cauchy sequence $(Hy_n)$ has Cauchy quotients
$(y_n)$, which proves closedness and continuity of division.

For \textup{(ii)}, given $x\in J$, saturation allows us to write
\[
x=fa_0+\pi x_1,\qquad x_1\in J.
\]
Iteration gives $a_i\in A$ and $x_n\in J$ such that
\[
x-f\sum_{i=0}^{n-1}\pi^ia_i=\pi^nx_n.
\]
The sum converges in $A$, and separatedness gives $x\in(f)$. The final
assertion follows because every finitely generated ideal of $A/\pi$ is
principal.
\end{proof}

\begin{corollary}\label{cor:intersection-principalization}
Let $A$ and $\pi$ be as in Lemma~\ref{lem:adic-basics}, let
$I=\bigcap_\lambda I_\lambda$ for a family of $\pi$-saturated ideals, and
write bars for reduction modulo $\pi$. If $H\in I$ and
$\bigcap_\lambda\overline{I_\lambda}=(\overline H)$, then $I=(H)$.
\end{corollary}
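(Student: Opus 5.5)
The plan is to check three things in turn: that $I$ is $\pi$-saturated, that its image modulo $\pi$ is generated by $\overline H$, and then to invoke Lemma~\ref{lem:adic-basics}\textup{(ii)} with $J=I$ and $f=H$, which gives $I=(H)$.

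Saturation is immediate from the definition. If $\pi x\in I$, then $\pi x\in I_\lambda$ for every $\lambda$. Each $I_\lambda$ is $\pi$-saturated, so $x\in I_\lambda$ for every $\lambda$, and hence $x\in I$.

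For the image, let $\overline I$ denote the image of $I$ in $A/\pi$. Since $I\subseteq I_\lambda$ for every $\lambda$, we have
\[
\overline I\subseteq\bigcap_\lambda\overline{I_\lambda}=(\overline H).
\]
Conversely, $H\in I$ gives $(\overline H)\subseteq\overline I$. Therefore $\overline I=(\overline H)$, with $H\in I$ as required.

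Lemma~\ref{lem:adic-basics}\textup{(ii)} now applies to $J=I$ and $f=H$ and yields $I=(H)$. None of these steps is a real obstacle. The only point needing care is the direction of the containment $\overline{\bigcap I_\lambda}\subseteq\bigcap\overline{I_\lambda}$. It is exactly this containment that matters, and the reverse inclusion is supplied by the hypothesis $H\in I$ rather than by any general principle.

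If $\overline H=0$, the same argument still works formally. In that case $\overline I=0$, so $I\subseteq\pi A$, and saturation gives $I=\pi I$. Then $I\subseteq\bigcap_n\pi^nA=0$ by separatedness, so $I=0=(H)$, consistent with $H\in I$. This edge case is also covered by Lemma~\ref{lem:adic-basics}\textup{(ii)} itself.
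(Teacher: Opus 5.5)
Your proposal is correct and follows essentially the same route as the paper: you check that $I$ is $\pi$-saturated, sandwich $(\overline H)\subseteq\overline I\subseteq\bigcap_\lambda\overline{I_\lambda}=(\overline H)$, and then apply Lemma~\ref{lem:adic-basics}\textup{(ii)}. Your separate treatment of the case $\overline H=0$ is also correct, but it is not needed, since the lemma already covers that case.
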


\begin{proof}
The ideal $I$ is $\pi$-saturated, and
$(\overline H)\subseteq\overline I\subseteq
\bigcap_\lambda\overline{I_\lambda}=(\overline H)$. Now apply
Lemma~\ref{lem:adic-basics}\textup{(ii)}.
\end{proof}

\section{Kernel lifting and coherence}\label{sec:kernel-lifting}

\begin{theorem}\label{thm:coherence-lifting}
Let $R$ be a commutative ring and let $\pi\in R$. Put $V=R/\pi$.
Assume that $\pi$ is a nonzerodivisor and that $R$ is $\pi$-adically
complete and separated. If $V$ is a valuation domain whose fraction field $F$ is
spherically complete and satisfies $v(F^\times)=\RR$, then $R$ is a
coherent GCD domain.
\end{theorem}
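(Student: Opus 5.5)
We prove that $R$ is a domain, then coherence, then the GCD property.

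\emph{Domain.} Separatedness gives, for nonzero $x\in R$, a decomposition $x=\pi^a x'$ with $x'\notin\pi R$. If $xy=0$ with $x,y\neq0$, write $x=\pi^a x'$ and $y=\pi^b y'$. Since $\pi$ is a nonzerodivisor, $x'y'=0$. But $V$ is a domain and $\bar x'\bar y'=0$, which forces $\bar x'=0$ or $\bar y'=0$, a contradiction.

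\emph{Coherence.} We show that the kernel $N$ of any map $\varphi\colon R^n\to R^m$ is finitely generated.

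First, $N$ is $\pi$-saturated in $R^n$. Indeed, $\pi x\in N$ means $\pi\varphi(x)=0$, and since $\pi$ is a nonzerodivisor on $R^m$, this gives $\varphi(x)=0$.

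Next, consider the reduction $\overline N\subseteq V^n$. For $k\ge1$ put
\[
N_k=\{\bar x : x\in R^n,\ \varphi(x)\in\pi^kR^m\}.
\]
Each $N_k$ is a $V$-submodule of $V^n$, and the sequence is decreasing with $\overline N\subseteq\bigcap_k N_k$. We claim $\overline N=\bigcap_k N_k$, and this is the main obstacle. Given $\bar x\in\bigcap_k N_k$, we want a lift of $\bar x$ lying in $N$. The set of lifts $x$ modulo $\pi^k$ satisfying $\varphi(x)\equiv0\pmod{\pi^k}$ is an affine condition, so the plan is a compactness argument.

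For each $k$, let $Z_k\subseteq V^n$ be the set of first-order corrections $\bar y$ such that $x+\pi y$ can be improved to satisfy the congruence modulo $\pi^{k}$. We filter level by level, using that $V^n$ is finite free and that the relevant directions are finitely generated submodules (images of kernels of $\bar\varphi$-type maps, which are finitely generated by induction/coherence of $V$: valuation domains are coherent). Lemma~\ref{lem:affine-compactness} then yields a compatible choice at each level, and completeness assembles these into an actual element of $N$. We would try to set this up as a nested sequence of nonempty affine cosets $Z_1\supseteq Z_2\supseteq\cdots$ in $V^n$, with directions finitely generated, and iterate the construction digit by digit with a diagonal argument.

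Granting the claim, $\overline N$ is an intersection of the submodules $N_k$. These are finitely generated, since each is the preimage of a reduction of a finitely generated module over a coherent ring. By Lemma~\ref{lem:intersection-finiteness}, $\overline N$ is finite free.

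Now lift generators $f_1,\dots,f_r\in N$ of $\overline N$. The module version of Lemma~\ref{lem:adic-basics}(ii), proved by the same successive approximation (saturation lets us write $x=\sum f_ia_i+\pi x_1$ with $x_1\in N$, then iterate and use completeness and separatedness), gives $N=\sum Rf_i$. Hence $R$ is coherent.

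\emph{GCD.} Given nonzero $a,b\in R$, the ideal $(a)\cap(b)$ is finitely generated by coherence. Dividing out common powers of $\pi$, we may assume that $a$ or $b$ is nonzero mod $\pi$, say $a$. By Lemma~\ref{lem:adic-basics}(i), $(a)$ is $\pi$-saturated. We aim to show that $(a)\cap(b)$ is $\pi$-saturated:
\begin{itemize}
\item if $\pi x\in(a)\cap(b)$, then $x\in(a)$ by saturation of $(a)$;
\item we also need $x\in(b)$, which needs care when $b\in\pi R$. In that case write $b=\pi^s b'$, and the intersection is $\pi^s\bigl((a)\cap(b')\bigr)$, using that $(a)$ is $\pi$-saturated.
\end{itemize}
This reduces us to $a,b\notin\pi R$, where both ideals are saturated and so is their intersection.

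Lemma~\ref{lem:adic-basics}(ii) then shows that $(a)\cap(b)$ is principal. A domain in which every intersection of two principal ideals is principal is a GCD domain, since the lcm exists and $\gcd=ab/\operatorname{lcm}$.
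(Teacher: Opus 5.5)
Your architecture is the paper's. Your $N_k$ is its $\Lambda_k=\im(\mathcal K_k\to\mathcal K_1)$, and your claim $\overline N=\bigcap_kN_k$ is \eqref{eq:kernel-image}. The remaining steps also agree with the paper and are fine: Lemma~\ref{lem:intersection-finiteness}, saturation plus successive approximation in place of complete Nakayama, and the GCD reduction to $a,b\notin\pi R$.

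The gap is that the claim you call the main obstacle, which is the whole content of Proposition~\ref{prop:linear-compactness}, is only announced as a plan. Two ingredients are missing.

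First, finite generation of $N_k$ needs $\mathcal K_k=\ker(\varphi\bmod\pi^k)$ to be finitely generated, i.e.\ coherence of $R/\pi^k$ rather than of $V$. You assert this (``over a coherent ring'', ``by induction/coherence of $V$'') but do not prove it. The paper gets it by induction from the square-zero extension criterion, using that $\pi^kR/\pi^{k+1}R\cong V$ is finitely presented over $R/\pi^k$. A direct induction also works:
\begin{itemize}
\item the kernel of $\mathcal K_{k+1}\to\mathcal K_k$ is isomorphic to $\ker\bar\varphi$;
\item its image is the kernel of the obstruction $x\mapsto\overline{\varphi(x)/\pi^k}\in V^m/\bar\varphi(V^n)$, which factors through $\mathcal K_k/\pi\mathcal K_k$.
\end{itemize}
So, by the induction hypothesis and coherence of $V$, both pieces are finitely generated.

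Second, the digit-by-digit construction needs the right invariant, namely the paper's $E_r=\bigcap_{s\ge r}\im(\mathcal K_s\to\mathcal K_r)$. At stage $r$ you hold $x_r\in E_r$, a solution modulo $\pi^r$ that lifts separately to a solution modulo every $\pi^s$. Fix a lift $\tilde x\in R^n$ of $x_r$. For $s>r$, let $Z_s$ be the set of $\bar y\in V^n$ with $\tilde x+\pi^ry\bmod\pi^{r+1}\in\im(\mathcal K_s\to\mathcal K_{r+1})$. Then:
\begin{itemize}
\item $Z_s$ is nonempty precisely because $x_r\in E_r$;
\item $Z_s$ decreases in $s$;
\item cancelling $\pi^r$ by regularity of $\pi$ on $R^m$ shows that $Z_s$ is a coset of your own $N_{s-r}$, hence an affine coset with finitely generated direction.
\end{itemize}
Lemma~\ref{lem:affine-compactness} gives $\bar y$ in every $Z_s$, so $x_{r+1}=\tilde x+\pi^ry\bmod\pi^{r+1}$ lies in $E_{r+1}$ over $x_r$. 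Starting from $x_1=\bar x\in\bigcap_kN_k=E_1$, the invariant propagates with no diagonal argument. Completeness and separatedness then turn the compatible $x_r$ into an element of $N$ lifting $\bar x$.

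With these two points written out, your proof is complete. Identifying the directions as $N_{s-r}$ even spares you the paper's separate coherence argument for its modules $D_s$, so finite generation of the $N_k$ becomes the only finiteness input.
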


Applying Theorem~\ref{thm:coherence-lifting} to $(R,\pi)=(V[[T]],T)$ and,
in characteristic $p$, to $(R,\pi)=(\W(V),p)$ proves the positive
implications of Theorem~\ref{thm:classification}.

Before proving the theorem, we establish the lifting statement needed
for coherence. Retain its notation and put $R_n=R/\pi^n$.
Every nonzero $x\in R$ can be written as $x=\pi^rx_0$ with
$x_0\notin\pi R$. Thus, if $x=\pi^rx_0$ and $y=\pi^sy_0$ are
nonzero, then $\overline{x_0y_0}\neq0$ in $V$, so $xy\neq0$.
Hence $R$ is a domain; moreover, $(\pi)$ is prime because
$R/(\pi)=V$ is a domain.

Every $R_n$ is coherent. Indeed, $R_1=V$ is a coherent valuation domain, and
for $n\geq1$ the map $R_{n+1}\twoheadrightarrow R_n$ has square-zero kernel
$\pi^nR/\pi^{n+1}R$. Regularity of $\pi$ identifies this kernel, as an
$R_n$-module, with the finitely presented module $V=R_n/\pi R_n$. The
square-zero extension criterion \cite[Lemma~3.26]{BMS} therefore gives the
claim by induction.

Fix a finite matrix $\Phi\colon R^m\to R^n$ and write
\[
\mathcal K=\ker(\Phi),\qquad
\mathcal K_r=\ker(\Phi_r\colon R_r^m\to R_r^n).
\]
For $s\geq r$, let $\rho_{s,r}\colon\mathcal K_s\to\mathcal K_r$ be reduction
modulo $\pi^r$.

Since inverse limits commute with kernels, completeness and separatedness give
\begin{equation}\label{eq:kernel-limit}
\mathcal K\xrightarrow{\sim}\varprojlim_r\mathcal K_r.
\end{equation}

Define
\[
E_r=\bigcap_{s\geq r}\im(\rho_{s,r})\subseteq\mathcal K_r.
\]
Thus $E_r$ consists of solutions modulo $\pi^r$ that lift separately to every higher level, without an a priori compatible choice of lifts.

\begin{proposition}\label{prop:linear-compactness}
Every transition map $E_{r+1}\to E_r$ is surjective. Consequently,
\begin{equation}\label{eq:kernel-image}
\im(\mathcal K\to\mathcal K_1)
=\bigcap_{r\geq1}\im(\mathcal K_r\to\mathcal K_1).
\end{equation}
\end{proposition}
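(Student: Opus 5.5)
The plan is to prove surjectivity of $E_{r+1}\to E_r$ by a fibrewise application of Lemma~\ref{lem:affine-compactness}, and then to deduce \eqref{eq:kernel-image} from \eqref{eq:kernel-limit} by building a compatible sequence of lifts.

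\emph{Setting up the affine cosets.} Fix $x\in E_r$ and put $I_s=\im(\rho_{s,r+1})\subseteq\mathcal K_{r+1}$ for $s\ge r+1$. The sets $I_s$ decrease in $s$, since $\rho_{s',r+1}=\rho_{s,r+1}\circ\rho_{s',s}$ for $s'\ge s$. For each $s\ge r+1$ define
\[
Z_s=\{\,y\in I_s : y\equiv x \pmod{\pi^r}\,\}.
\]
Each $Z_s$ is nonempty: because $x\in E_r$, there is $z\in\mathcal K_s$ with $\rho_{s,r}(z)=x$, and then $\rho_{s,r+1}(z)\in Z_s$. The $Z_s$ also decrease in $s$.

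Fix one element $y_0\in Z_{r+1}$. Every $Z_s-y_0$ lies in $\pi^rR_{r+1}^m$. Because $\pi$ is regular, $\pi^rR_{r+1}^m\cong(\pi^rR/\pi^{r+1}R)^m\cong V^m$, which is finite free over $V$. Choosing any $y_s\in Z_s$, we get
\[
Z_s-y_0=(y_s-y_0)+N_s,\qquad N_s=I_s\cap\pi^rR_{r+1}^m .
\]

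\emph{Finite generation of the directions.} The main point is that each $N_s$ is a finitely generated $V$-module, so that $Z_s-y_0$ is an affine coset in $V^m$. Since $R_s$ is coherent (established above via \cite[Lemma~3.26]{BMS}), $\mathcal K_s$ is finitely generated over $R_s$. Hence $I_s$ is a finitely generated $R_{r+1}$-submodule of $R_{r+1}^m$. The submodule $\pi^rR_{r+1}^m$ is also finitely generated. Because $R_{r+1}$ is coherent, the intersection $N_s$ of these two finitely generated submodules of a finite free module is finitely generated \cite[\stackstag{05CU}]{StacksProject}. Finally, $N_s$ is killed by $\pi$, so it is a finitely generated $R_{r+1}/\pi=V$-module, and it sits inside $V^m$.

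\emph{Applying compactness.} Lemma~\ref{lem:affine-compactness} applies to the descending chain $(Z_s-y_0)_{s\ge r+1}$ of nonempty affine cosets in $V^m$, since $F$ is spherically complete. It gives a point $y\in\bigcap_s Z_s$. Such a $y$ lies in every $I_s=\im(\rho_{s,r+1})$, so $y\in E_{r+1}$, and it reduces to $x$ modulo $\pi^r$. This proves that $E_{r+1}\to E_r$ is surjective.

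\emph{Deducing \eqref{eq:kernel-image}.} The inclusion ``$\subseteq$'' is clear, because $\mathcal K\to\mathcal K_1$ factors through each $\mathcal K_r$. For ``$\supseteq$'', note that the right-hand side of \eqref{eq:kernel-image} is exactly $E_1$. Given $x_1\in E_1$, use surjectivity repeatedly to choose $x_r\in E_r\subseteq\mathcal K_r$ with $x_{r+1}\mapsto x_r$ for every $r$. This compatible sequence defines an element of $\varprojlim_r\mathcal K_r$, and by \eqref{eq:kernel-limit} it comes from some element of $\mathcal K$ whose image in $\mathcal K_1$ is $x_1$.

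The step I expect to need the most care is verifying that the directions $N_s$ are finitely generated over $V$. It rests on coherence of the truncations $R_{r+1}$ and on the identification $\pi^rR_{r+1}^m\cong V^m$ given by regularity of $\pi$, which is what places all the cosets $Z_s-y_0$ in a single finite free $V$-module.
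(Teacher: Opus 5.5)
Your proof is correct and follows the paper's argument: you use the same fibres $Z_s$ over $x$ inside $\im(\mathcal K_s\to\mathcal K_{r+1})$, get finite generation of their directions from coherence of $R_{r+1}$, apply Lemma~\ref{lem:affine-compactness}, and then use \eqref{eq:kernel-limit}. The one difference is minor: you place the translated cosets in the ambient module $\pi^rR_{r+1}^m\cong V^m$ rather than in the kernel $N_r=\ker(\mathcal K_{r+1}\to\mathcal K_r)$, which lets you skip the paper's check that $N_r$ is finite free.
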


\begin{proof}
Fix $x_r\in E_r$. For $s\geq r+1$, put
\[
Y_s=\im(\mathcal K_s\to\mathcal K_{r+1}),\qquad
Z_s=\{z\in Y_s\mid\rho_{r+1,r}(z)=x_r\}.
\]
Each $Z_s$ is nonempty and the sequence is descending.

Let $N_r=\ker(\mathcal K_{r+1}\to\mathcal K_r)$. Coherence of $R_{r+1}$
shows that $\mathcal K_{r+1}$ is finitely presented. The same holds for
$\mathcal K_r$, first over $R_r$
and then over the finitely presented quotient
$R_r=R_{r+1}/(\pi^r)$. Hence $N_r$ is finitely presented. It is killed by
$\pi$; under the natural identification
$\pi^rR^m/\pi^{r+1}R^m\cong V^m$, it corresponds to a finite
torsion-free $V$-submodule and is therefore finite free.

Choose $z_s\in Z_s$ and set $D_s=Y_s\cap N_r$. Then
$Z_s=z_s+D_s$. The image $Y_s$ is finite over $R_{r+1}$, and coherence
shows that $D_s$ is finite; being killed by $\pi$, it is finite over $V$.
Fix $z^\circ\in Z_{r+1}$. After translating the common fiber
$\rho_{r+1,r}^{-1}(x_r)=z^\circ+N_r$ by $-z^\circ$, the sets $Z_s$ become
affine cosets in the finite free $V$-module $N_r$.
Lemma~\ref{lem:affine-compactness} gives a point of their intersection, which
belongs to $E_{r+1}$ and maps to $x_r$.

Starting with $x_1\in E_1$, recursively choose compatible $x_r\in E_r$. By
\eqref{eq:kernel-limit}, the system comes from an element of $\mathcal K$,
proving \eqref{eq:kernel-image}.
\end{proof}

\begin{proof}[Proof of Theorem~\ref{thm:coherence-lifting}]
Set $\Lambda_r=\im(\mathcal K_r\to\mathcal K_1)$. Each $\Lambda_r$ is a
finitely generated $V$-submodule of $\mathcal K_1\subseteq V^m$, and the
sequence is descending. By
Proposition~\ref{prop:linear-compactness} and
Lemma~\ref{lem:intersection-finiteness}, the module
$\im(\mathcal K\to\mathcal K_1)=\bigcap_r\Lambda_r$ in
\eqref{eq:kernel-image} is finitely generated over $V$.

The module $\mathcal K$ is $\pi$-saturated in $R^m$: if
$\pi x\in\mathcal K$, then $0=\Phi(\pi x)=\pi\Phi(x)$, and regularity of
$\pi$ on $R^n$ gives $\Phi(x)=0$. Hence
\[
\mathcal K\cap\pi R^m=\pi\mathcal K,
\qquad
\mathcal K/\pi\mathcal K\cong\im(\mathcal K\to\mathcal K_1).
\]
Moreover, $\bigcap_r\pi^r\mathcal K=0$ because $\mathcal K\subseteq R^m$.
The complete Nakayama lemma \cite[\stackstag{031D}]{StacksProject} now
shows that $\mathcal K$ is finitely generated. Hence $R$ is coherent.

It remains to prove the GCD assertion. For nonzero $f,g\in R$, write
\[
f=\pi^ma,\qquad g=\pi^nb,\qquad \pi\nmid a,b,
\]
and, after interchanging them if necessary, suppose that $m\leq n$.
Since $\pi$ is prime and $\pi\nmid a,b$, the ideals $(a)$ and $(b)$, and
hence their intersection, are $\pi$-saturated. Coherence of $R$ and
Lemma~\ref{lem:adic-basics}\textup{(ii)} therefore give
$(a)\cap(b)=(c)$ for some $c\in R$. We claim that
\[
(f)\cap(g)=(\pi^nc).
\]
The inclusion from right to left is immediate. Conversely, if
$x=\pi^mau=\pi^nbv\in(f)\cap(g)$, then cancellation of $\pi^m$ and
saturation of $(a)$ give
$bv\in(a)\cap(b)=(c)$, hence $x\in(\pi^nc)$. Thus
$(f)\cap(g)=(\pi^nc)$, and $fg/(\pi^nc)$ is a greatest common divisor of
$f$ and $g$. The case in which one element is zero is immediate.
\end{proof}

\begin{remark}\label{rem:sharp-inputs}
The proof uses two properties: descending intersections of finitely
generated submodules remain finitely generated, and descending affine
cosets meet. Let $\Gamma=v(F^\times)$. The first property for $M=V$
rules out higher rank: if $0<\delta$ lies in a nonzero proper convex
subgroup, the principal ideals of values $n\delta$ have a nonzero
nonprincipal intersection. After embedding an Archimedean $\Gamma$ in
$\RR$, it also rules out a nondiscrete proper subgroup, since values
increasing to any $\gamma\in\RR\setminus\Gamma$ yield principal ideals
with nonprincipal intersection. Thus $\Gamma$ is discrete or
isomorphic to $\RR$. In either case every ball chain has a countable
cofinal subchain; after rescaling, the second property for $M=V$ gives
spherical completeness. Hence, in the nondiscrete case, the two
properties recover precisely the hypotheses of
Theorem~\ref{thm:coherence-lifting}.
\end{remark}

\section{Ball chains and missing affine cosets}\label{sec:valuation-data}

From an empty ball chain in $K$, this section extracts rapidly decaying
valuation data and a missing affine coset over a maximal immediate
extension, to be used for $V[[T]]$ and $\W(V)$ in
Sections~\ref{sec:power} and~\ref{sec:witt}.

\begin{theorem}\label{thm:negative}
Let $V$ be an $\RR$-valued complete valuation ring with fraction field $K$,
and assume that $K$ is not spherically complete.
\begin{enumerate}[label=\textup{(\roman*)}]
\item In arbitrary characteristic, there exist $f,g\in V[[T]]$ such that
they have no greatest common divisor and $(f)\cap(g)$ is not finitely generated.
In particular, $V[[T]]$ is neither coherent nor a GCD domain.
\item If $K$ is perfect of characteristic $p$, there exist $f,g\in\W(V)$
such that they have no greatest common divisor and $(f)\cap(g)$ is not
finitely generated. In particular, $\W(V)$ is neither coherent nor a GCD domain.
\end{enumerate}
\end{theorem}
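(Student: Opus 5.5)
The plan is to reduce both parts to a single algebraic input extracted from an empty ball chain. Then we build two elements whose intersection ideal is the intersection of an explicit descending family of principal ideals that never stabilizes.

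\textbf{Step 1 (valuation data).} Choose a decreasing chain of balls $B(c_n,\rho_n)$ in $K$ with empty intersection. Since $v(K^\times)=\RR$ and $K$ is complete, the radii cannot tend to $0$ (otherwise the centers form a Cauchy sequence with a limit in every ball). So $\rho_n\downarrow\rho>0$, and after rescaling we may assume $\rho$ corresponds to valuation $0$. Passing to a subsequence, write $c_{n+1}-c_n=a_n$ with $v(a_n)=\gamma_n$ strictly increasing to $0$. Let $\Ktilde$ be a maximal immediate extension, with valuation ring $\Vtilde$. The chain has a pseudo-limit $\alpha\in\Ktilde$, and $\alpha\notin K+\m_{\Ktilde}$-type cosets: precisely, no $c\in K$ satisfies $v(\alpha-c)\ge 0$. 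This is the ``normalized missing affine coset''. I would also arrange rapid decay, e.g.\ $\gamma_n$ chosen so that $\sum$-type estimates in Step 2 converge.

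\textbf{Step 2 (power series, part (i)).} Following Anderson--Kang--Park, take $g=T-b$-type or, more robustly, $f=\sum_n a_n' T^n$ and $g$ built from the partial sums, so that an element $h\in V[[T]]$ lies in $(f)\cap(g)$ iff its coefficients solve a triangular linear system. The solvability of that system at level $n$ amounts to the truncated ball $B(c_n,\rho_n)$ containing a suitable element. Concretely, I aim for
\[
(f)\cap(g)=\bigcap_n (fg\,u_n^{-1}),
\]
with $u_n\in V$ of valuation $-\gamma_n$ (after clearing denominators), i.e.\ a strictly increasing chain of principal ideals $(fg\,\varpi_n)$ with $v(\varpi_n)=-\gamma_n+$const. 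The intersection of these principal ideals would be principal only if some element realized the limit value, which would be produced by a point of $\bigcap_n B(c_n,\rho_n)$, contrary to emptiness. Using Lemma~\ref{lem:adic-basics}\textup{(i)}, the ideals $(f),(g)$ are $T$-saturated and closed (choose $f,g\not\equiv0\bmod T$). Then Corollary~\ref{cor:intersection-principalization} and reduction mod $T$ let me compute the intersection through its image in $V$. The key is that this image is $\bigcap_n(\varpi_n)=\{x:v(x)>\gamma\}$-type, which is nonprincipal, so $(f)\cap(g)$ is not principal. Since $(f)\cap(g)$ is $T$-saturated, it is also not finitely generated by Lemma~\ref{lem:adic-basics}\textup{(ii)}. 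Since $(f)\cap(g)$ is nonprincipal, $f,g$ have no lcm, hence in a domain no gcd. Also, if $R$ were coherent, $(f)\cap(g)$ would be finitely generated.

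\textbf{Step 3 (Witt vectors, part (ii)).} I will repeat the construction with $T$ replaced by $p$, using Teichmüller lifts $[a_n]$ and $f=\sum p^n[a_n']$-type elements. Carries in Witt addition only perturb coefficients at higher $p$-adic levels by terms of larger valuation, and the rapid decay in Step 1 absorbs them. The same saturation arguments apply with $(R,\pi)=(\W(V),p)$.

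\textbf{Main obstacle.} The hardest step is proving that $(f)\cap(g)$ really is the intersection of the descending principal family. Showing that any element of the intersection forces a compatible system of centers would produce a point of the empty chain; this uses the immediate extension $\Vtilde$ to write the formal solution and then show it cannot descend to $V$. For Witt vectors the additional difficulty is controlling carries, which I would handle via the rapid decay of the $\gamma_n$.
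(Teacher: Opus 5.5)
There is a genuine gap, and it sits exactly where the full-real-value-group case is hard. Step~2 never specifies $f$ and $g$. The mechanism you propose for non-principality also cannot work.

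You want to identify the image of $(f)\cap(g)$ modulo $T$ with an intersection $\bigcap_n\varpi_nV$ of principal ideals of $V$, and then call it nonprincipal. But since $v(K^\times)=\RR$, any intersection of principal ideals of $V$ equals $\{x\in V: v(x)\geq\sup_n v(\varpi_n)\}$, which is principal or zero. The nonprincipal ideals $\{x:v(x)>\gamma\}$ arise as unions of increasing chains of principal ideals, never as intersections. That obstruction is available for proper dense subgroups of $\RR$ (compare Remark~\ref{rem:sharp-inputs}) and disappears for $\RR$ itself.

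Your concrete target is in fact self-defeating. If $fg\,u_n^{-1}\in A$ for all $n$, then Corollary~\ref{cor:intersection-principalization}, applied to these $T$-saturated ideals, gives $\bigcap_n(fg\,u_n^{-1})=(fg)$. Under the ``strictly increasing'' reading, the intersection is just the first term. That Corollary can only produce principality; it never certifies non-principality.

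What is true, by Lemma~\ref{lem:adic-basics}, is that $(f)\cap(g)$ is finitely generated iff its image modulo $T$ is principal. This image can be strictly smaller than the intersection of the images of the individual principal ideals. Detecting that needs first-order information in $T$ (or $p$), not a valuation computation in $V$. Your ``main obstacle'' paragraph points at this (solve over $\Vtilde$, show the solution does not descend), but nothing in the plan carries it out.

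The paper supplies the missing ingredients in three steps:
\begin{enumerate}
\item It normalizes the empty chain into data $a_i,c,b_1$ with $\sum_i r^i v(a_i)<\infty$ for all $r$, together with a coset $b_1+c\Vtilde$ disjoint from $K$.
\item It produces a generator $F=c+b_1T+O(T^2)$ of $\bigcap_N\bigl(\prod_{i\leq N}(a_i-T)\bigr)$ in $\Vtilde[[T]]$.
\item It takes $f,g$ to be infinite products of $a_i^{N_i}-T^{N_i}$ and $a_i^{M_i}-T^{M_i}$ with $\gcd(N_i,M_i)=1$. A root-by-root multiplicity count then shows that $F$ is a gcd of $f,g$ over $\Vtilde[[T]]$.
\end{enumerate}
Any gcd $H$ over $V[[T]]$ must then be a unit multiple of $F$. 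This forces the linear coefficient of $H$ into $b_1+c\Vtilde$, which misses $K$, a contradiction. Non-finite generation follows because, once Lemma~\ref{lem:adic-basics} reduces finite generation to principality, $(f)\cap(g)=(h)$ would make $fg/h$ a gcd.

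Two further points. First, your inference ``no lcm, hence no gcd'' runs the wrong way. An lcm yields a gcd, but a single pair can have a gcd and no lcm (for example $x^2,x^3$ in $k[x^2,x^3]$). So it would not give the asserted absence of a gcd for $f,g$. Second, in the Witt case carry estimates are not the real issue. The multiplicity count needs $[a_i]-p$ to be prime in $\W(\Vtilde)$ with valuation-domain quotient, which the paper takes from Kedlaya--Liu.
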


For the remainder of the paper, retain the notation and hypotheses of
Theorem~\ref{thm:negative}.

Fix a maximal immediate extension $\Ktilde/K$, and put
$\Vtilde=\mathcal O_{\Ktilde}$.
Here an extension of valued fields is called immediate if the induced maps
on value groups and residue fields are isomorphisms
\cite[p.~88]{PoonenMaximal}. Thus
$v(\Ktilde^\times)=v(K^\times)=\RR$, and the residue fields of
$\Ktilde$ and $K$ agree.
Such an extension exists by \cite[Corollary~6]{PoonenMaximal}, and
$\Ktilde$ is spherically complete by
\cite[Theorem~4]{KaplanskyMaximal}. 

An arbitrary empty ball chain need not satisfy the algebraic relations
among centers and directions required below. The following lemma replaces
it by missing affine cosets in a form compatible with the later algebra
and convergence arguments.

\begin{constructionlemma}[Rapidly decaying missing-coset data]
\label{constr:valuation-data}
There exist $c\in\mathfrak m_V\setminus\{0\}$,
$b_1\in\mathfrak m_{\Vtilde}\setminus K$, and sequences
$A_N\in\mathfrak m_V\setminus\{0\}$ and $s_N\in\mathfrak m_V$ with the
following properties.
\begin{enumerate}[label=\textup{(\roman*)}]
\item The affine cosets $s_N+A_N\Vtilde$ form a descending sequence and
satisfy
\begin{equation}\label{eq:missing-coset}
\bigcap_{N\geq1}(s_N+A_N\Vtilde)=b_1+c\Vtilde,
\qquad
(b_1+c\Vtilde)\cap K=\varnothing.
\end{equation}
\item There are elements $a_i\in\mathfrak m_V\setminus\{0\}$ such that
\[
A_N=\prod_{i=1}^N a_i,\qquad
s_N=-\sum_{i=1}^N\frac{c}{a_i}.
\]
\item Writing $\alpha_i=v(a_i)$, the sequence $(\alpha_i)$ is strictly
decreasing with limit zero and
\[
\sum_{i\geq1}r^i\alpha_i<\infty
\qquad\text{for every real number }r\geq1.
\]
Moreover,
\[
v(c)=\sum_{i\geq1}\alpha_i
\qquad\text{and}\qquad
v(A_N)=\sum_{i=1}^N\alpha_i.
\]
\end{enumerate}
\end{constructionlemma}

\begin{proof}
Choose a chain $\{B_\lambda\}$ of closed balls in $K$ with empty
intersection, and write $r_\lambda$ for their radii. These radii are
bounded above. Otherwise, a cofinal sequence of balls with radii tending
to $+\infty$ would have Cauchy centers, and their limit would belong to
every ball. If $\rho=\sup_\lambda r_\lambda$, no ball in the chain has
radius $\rho$. We may therefore choose a cofinal sequence
\[
B_n=z_n+\{x\in K\mid v(x)\geq\rho_n\},
\qquad B_{n+1}\subseteq B_n,
\]
where $(\rho_n)$ is strictly increasing with limit $\rho$.

The corresponding balls
\[
\widetilde B_n=z_n+\{x\in\Ktilde\mid v(x)\geq\rho_n\}
\]
are still nested. Since $\Ktilde$ is spherically complete, they have a
common point $\eta\in\Ktilde$. Necessarily $\eta\notin K$. By changing
each center without changing its ball, we may
also arrange that
\[
v(\eta-z_n)=\rho_n\qquad(n\geq0).
\]
Indeed, if the left side is larger than $\rho_n$, replace $z_n$ by
$z_n+t_n$ for an element $t_n\in K$ of valuation $\rho_n$.

Write $\epsilon_n=\rho-\rho_n$. After passing to a further cofinal
subsequence, we may assume that $(\epsilon_n)$ is strictly decreasing and
\[
0<\epsilon_n\leq 2^{-(n+1)^2}\qquad(n\geq0).
\]
In particular, if $D=\sum_{n\geq0}\epsilon_n$, then $D$ is finite.
Choose $\lambda\in K^\times$ with $v(\lambda)=D-\rho$ and replace
$\eta$ and all $z_n$ by their products with $\lambda$. We then have
\[
v(\eta-z_n)=D-\epsilon_n
\]
and
\begin{equation}\label{eq:normalized-empty-chain}
\bigcap_{n\geq1}
\{x\in K\mid v(x-z_n)\geq D-\epsilon_n\}=\varnothing.
\end{equation}

Choose $c\in K$ with $v(c)=D$, and for $i\geq1$ define
\[
\alpha_i=\epsilon_{i-1},
\qquad
a_i=\frac{c}{z_i-z_{i-1}}.
\]
The sequence $(\alpha_i)$ is strictly decreasing with limit zero, and its
Gaussian bound $\alpha_i\leq2^{-i^2}$ gives
$\sum_i r^i\alpha_i<\infty$ for every $r\geq1$. Since
$v(\eta-z_i)>v(\eta-z_{i-1})$, the strong triangle equality gives
\[
v(z_i-z_{i-1})=D-\alpha_i,
\qquad
v(a_i)=\alpha_i>0,
\qquad
\sum_{i\geq1}\alpha_i=D.
\]
Define $A_N$ and $s_N$ as in the statement, and put
$b_1=-(\eta-z_0)$. Since $D>\alpha_i>0$, the definitions give
$0\neq c,A_N\in\mathfrak m_V$ and $s_N\in\mathfrak m_V$.
Moreover, $v(b_1)=D-\alpha_1>0$ and $\eta\notin K$, so
$b_1\in\mathfrak m_{\Vtilde}\setminus K$. Also,
\[
v(b_1-s_N)=D-\alpha_{N+1}\geq v(A_N),
\]
so $s_N+A_N\Vtilde=b_1+A_N\Vtilde$ for every $N$.
Since $A_{N+1}=A_Na_{N+1}$ with $a_{N+1}\in\mathfrak m_V$, these affine
cosets form a descending sequence.

The sequence $v(A_N)=\sum_{i=1}^N\alpha_i$ increases to
$D=v(c)$. It follows that
\[
\bigcap_{N\geq1}A_N\Vtilde=c\Vtilde
\]
and hence that the first identity in \eqref{eq:missing-coset} holds.
Finally, if $x\in(b_1+c\Vtilde)\cap K$ and $y=z_0-x$, then
\[
v(y-z_N)=v(x-s_N)
\geq\min\{D,D-\alpha_{N+1}\}
=D-\epsilon_N.
\]
This contradicts \eqref{eq:normalized-empty-chain} and proves the second
identity.
\end{proof}

\section{Formal power series}\label{sec:power}

Following \cite[Lemmas~6--7 and the proof of
Theorem~8]{AndersonKangPark}, we replace their special Hahn-series data
by $(a_i)$ and $(A_N,s_N)$ from
Construction Lemma~\ref{constr:valuation-data} and put
\[
A=V[[T]],
\qquad
\Atilde=\Vtilde[[T]],
\qquad
\xi_i=a_i-T,
\qquad
P_N=\prod_{i=1}^N(a_i-T).
\]
Propositions~\ref{prop:power-F} and~\ref{prop:power-divisor-data}
construct $F\in\Atilde$ and $f,g\in A$, respectively, and
Proposition~\ref{prop:power-gcd} proves that $F$ is a greatest common
divisor over $\Atilde$, whereas no greatest common divisor exists over $A$.

Lemma~\ref{lem:adic-basics}\textup{(i)} applies in $\Atilde$ with
$\pi=T$.

\begin{proposition}\label{prop:power-F}
For the elements $c,b_1$ in
Construction Lemma~\ref{constr:valuation-data}, there
exists $F\in\Atilde$ such that
\[
F\in\bigcap_{N\geq1}(P_N),
\qquad
F(T)=c+b_1T+O(T^2).
\]
Moreover,
\begin{equation}\label{eq:power-principal-intersection}
\bigcap_{N\geq1}(P_N)=(F).
\end{equation}
\end{proposition}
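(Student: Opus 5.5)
The plan is to get \eqref{eq:power-principal-intersection} from Corollary~\ref{cor:intersection-principalization} once a suitable $F$ exists, so most of the work is constructing $F$.

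\emph{Reduction to existence.} In $\Atilde=\Vtilde[[T]]$ with $\pi=T$, the element $P_N$ has constant term $A_N\neq0$. So Lemma~\ref{lem:adic-basics}\textup{(i)} shows that each $(P_N)$ is $T$-saturated and $T$-adically closed, with $(P_N)\cap T^r\Atilde=T^r(P_N)$. Modulo $T$ we have $\overline{(P_N)}=A_N\Vtilde$. By Construction Lemma~\ref{constr:valuation-data}\textup{(iii)}, $v(A_N)$ increases to $v(c)$, so $\bigcap_N A_N\Vtilde=c\Vtilde$. Hence, if $F\in\bigcap_N(P_N)$ has constant term $c$, Corollary~\ref{cor:intersection-principalization} gives $\bigcap_N(P_N)=(F)$. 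It therefore suffices to produce $F\in\bigcap_N(P_N)$ with $F\equiv c+b_1T \pmod{T^2}$.

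\emph{The level-two condition.} Expanding $P_N=\prod_{i\le N}(a_i-T)$ gives
\[
P_N\equiv A_N-A_N\Bigl(\sum_{i\le N}a_i^{-1}\Bigr)T = A_N+\frac{A_N s_N}{c}\,T \pmod{T^2}.
\]
Set
\[
G_N=\frac{c}{A_N}+\frac{b_1-s_N}{A_N}\,T .
\]
Then $P_NG_N\equiv c+b_1T \pmod{T^2}$. Moreover $G_N\in\Atilde$: first $v(c)\ge v(A_N)$, and second $b_1\in s_N+A_N\Vtilde$ by \eqref{eq:missing-coset}. Consequently the sets
\[
S_N=\{F\in(P_N)\mid F\equiv c+b_1T \pmod{T^2}\}
\]
are nonempty, and they descend in $N$ because $(P_{N+1})\subseteq(P_N)$.

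\emph{Compactness argument, modelled on Proposition~\ref{prop:linear-compactness}.} Let $\operatorname{im}_k(S_N)\subseteq\Vtilde[T]/T^k$ denote the truncations of elements of $S_N$, and put $E_k=\bigcap_N\operatorname{im}_k(S_N)$. Then $x_2=c+b_1T$ lies in $E_2$. Given $x_k\in E_k$, we want $x_{k+1}\in E_{k+1}$ lifting it. For each $N$, the set of lifts of $x_k$ lying in $\operatorname{im}_{k+1}(S_N)$ is nonempty, and it is a translate of the set of $T^k$-coefficients $\{t: tT^k\in (P_N)+T^{k+1}\Atilde\}$.

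I would compute this direction explicitly. Suppose $P_Nh\equiv tT^k \pmod{T^{k+1}}$. Then the saturation identity $(P_N)\cap T^k\Atilde=T^k(P_N)$ forces $h\equiv0 \pmod{T^k}$, so the direction is exactly $A_N\Vtilde$. Thus the fibers form a descending sequence of affine cosets in the finite free $\Vtilde$-module $T^k\Vtilde/T^{k+1}\Vtilde\cong\Vtilde$. Since $\Ktilde$ is spherically complete, Lemma~\ref{lem:affine-compactness} gives a common point; this is the required $x_{k+1}$.

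\emph{Passing to the limit.} The compatible system $(x_k)$ defines $F\in\Atilde$ with $F\equiv c+b_1T \pmod{T^2}$. For each fixed $N$ and every $k$ we have $F\in(P_N)+T^k\Atilde$, and closedness of $(P_N)$ then gives $F\in(P_N)$.

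The main obstacle is the lifting step, namely showing that the fibers are genuinely affine cosets with a finitely generated (here principal) direction that is uniform enough to be nested in $N$. The saturation computation above is what I expect to settle it.
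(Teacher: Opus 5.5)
Your proof is correct and follows essentially the paper's route: the same level-two computation, the same inductive choice of the $T^k$-coefficient from a nested sequence of cosets with direction $A_N\Vtilde$ via spherical completeness of $\Ktilde$, and the same final appeal to Corollary~\ref{cor:intersection-principalization}. The only difference is cosmetic: you run the induction through truncation images of the sets $S_N$ and conclude $F\in(P_N)$ from closedness of $(P_N)$, whereas the paper tracks the unique quotients $Y_{N,r}$ and takes their limit.
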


\begin{proof}
We first construct the required class modulo $T^2$. For each $N$,
\[
P_N=A_N\prod_{i=1}^N(1-T/a_i)
\equiv A_N\left(1-T\sum_{i=1}^Na_i^{-1}\right)\pmod{T^2}.
\]
If $P_NY$ has constant term $c$, then $Y$ has constant term $c/A_N$,
while the displayed congruence shows that the linear coefficient of $P_NY$
ranges over $s_N+A_N\Vtilde$.
By Construction Lemma~\ref{constr:valuation-data}, $b_1$ belongs to this
coset for every $N$, so $P_N\mid F_2=c+b_1T\in\Atilde/T^2$ for every
$N$.

Suppose $F_r\in\Atilde/T^r$ has been constructed. For fixed $N$, write
$F_r=P_NY_{N,r}$. The class of $Y_{N,r}$ modulo $T^r$ is uniquely
determined by $F_r$, and changing a lift by $T^rz$ changes the coefficient of
$T^r$ in the product by $A_Nz$. Hence the coefficients that extend $F_r$
modulo $T^{r+1}$ while preserving divisibility by $P_N$ form a nonempty
affine coset $C_{N,r}$ with direction $A_N\Vtilde$. Since
$P_N\mid P_{N+1}$, divisibility by $P_{N+1}$ implies divisibility by
$P_N$, and therefore $C_{N+1,r}\subseteq C_{N,r}$. Spherical
completeness gives a coefficient in $\bigcap_N C_{N,r}$; choosing it
defines $F_{r+1}$. Recursion produces compatible $F_r$ and hence
$F\in\Atilde$. For each fixed $N$, uniqueness makes the quotients
$Y_{N,r}$ compatible, and their limit satisfies $F=P_NY_N$.

The ideal $I=\bigcap_N(P_N)$ is $T$-saturated. Since
$\bigcap_N\overline{(P_N)}=\bigcap_NA_N\Vtilde
=c\Vtilde=(F\bmod T)$,
Corollary~\ref{cor:intersection-principalization} gives $I=(F)$.
\end{proof}

Let $\ell$ denote the residue characteristic when it is positive. Set
$e=1$ in residue characteristic zero and $e=\ell$ otherwise, and define
\[
N_i=ei+1,
\qquad
M_i=eN_i+1
\qquad(i\geq1).
\]
Both sequences are strictly increasing, $\gcd(N_i,M_i)=1$, and, in
positive residue characteristic, every $N_i$ and $M_i$ is prime to
$\ell$.

We now construct two infinite products; the $Q_n$ below collect the
factors of $f/F$ left after removing the distinguished factors $a_i-T$.

\begin{proposition}\label{prop:power-divisor-data}
Put
\[
\beta_1=\sum_{i\geq1}N_i\alpha_i<\infty,\qquad
\beta_2=\sum_{i\geq1}M_i\alpha_i<\infty,
\]
and choose $c_1,c_2\in K$ with $v(c_1)=\beta_1$ and
$v(c_2)=\beta_2$. For $n\geq1$, form the polynomials
\begin{align*}
f_n(T)&\coloneqq
\frac{c_1}{\prod_{i=1}^na_i^{N_i}}
\prod_{i=1}^n(a_i^{N_i}-T^{N_i})\in A,\\
g_n(T)&\coloneqq
\frac{c_2}{\prod_{i=1}^na_i^{M_i}}
\prod_{i=1}^n(a_i^{M_i}-T^{M_i})\in A.
\end{align*}
The sequences $(f_n)$ and $(g_n)$ converge $T$-adically in $A$; denote
their limits by $f,g\in A$. Their constant terms are $c_1,c_2$,
respectively, and in particular are nonzero.
\begin{enumerate}[label=\textup{(\roman*)},leftmargin=2em,itemsep=2pt,topsep=3pt]
\item One has $F\mid f,g$ in $\Atilde$.
\item Write $f=FG$. For $i\geq1$, let $\zeta_{N_i}$ be a nontrivial
$N_i$-th root of unity in a finite valued extension
$L_{N_i}/\Ktilde$, put $W_{N_i}=\mathcal O_{L_{N_i}}$, and set
$\pi_{i,\zeta_{N_i}}=T-a_i\zeta_{N_i}$. Then, in $W_{N_i}[[T]]$,
\[
\pi_{i,\zeta_{N_i}}\mid f,G,
\qquad
\pi_{i,\zeta_{N_i}}\nmid g,F.
\]
\item Define
\[
R_i(T)=\frac{a_i^{N_i}-T^{N_i}}{a_i-T}
=\sum_{r=0}^{N_i-1}a_i^{N_i-1-r}T^r,
\qquad
Q_n=\prod_{i=1}^nR_i(T).
\]
Then
\[
Q_n\mid G\quad(n\geq1),
\qquad
\bigcap_{n\geq1}(Q_n)=(G).
\]
\end{enumerate}
\end{proposition}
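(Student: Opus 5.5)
We first treat convergence and the constant terms, then deduce (i)–(iii) from the factorization $a_i^{N_i}-T^{N_i}=(a_i-T)R_i(T)$. Write
\[
f_n=\frac{c_1}{\prod_{i>n}a_i^{N_i}}\prod_{i=1}^n\bigl(1-(T/a_i)^{N_i}\bigr)\cdot\prod_{i>n}a_i^{N_i}\cdot\frac{1}{1},
\]
or more simply note that $f_{n+1}-f_n=-f_n\,T^{N_{n+1}}/a_{n+1}^{N_{n+1}}$. Expanding, each coefficient of $f_n$ is $c_1$ times a sum of terms $\prod_{i\in S}a_i^{-N_i}$ over finite sets $S\subseteq\{1,\dots,n\}$. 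Every such term has valuation at least
\[
\beta_1-\sum_{i\in S}N_i\alpha_i\ \ge\ \sum_{i\notin S}N_i\alpha_i\ \ge 0 .
\]
So $f_n\in A$, and $f_n\in V[T]$ has constant term $c_1$. Finiteness of $\beta_1$ and $\beta_2$ follows from $\sum r^i\alpha_i<\infty$ in Construction Lemma~\ref{constr:valuation-data}, since $N_i,M_i$ grow linearly in $i$. Because $N_{n+1}\to\infty$, the difference $f_{n+1}-f_n$ lies in $T^{N_{n+1}}A$, so the sequence is $T$-adically Cauchy. Its limit $f$ has constant term $c_1$, and the same argument applies to $g$ with constant term $c_2$.

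For (i), note that $\prod_{i\le n}a_i^{N_i}$ divides $c_1\prod_{i\le n}R_i(0)$-type factors in the appropriate sense. More precisely, $f_n=P_n\cdot u_n$ with
\[
u_n=\frac{c_1}{\prod_{i\le n}a_i^{N_i}}\,Q_n .
\]
The coefficients of $u_n$ have valuation at least $\beta_1-\sum_{i\le n}N_i\alpha_i\ge0$ by the same estimate, so $u_n\in A$ and $P_N\mid f_n$ for all $n\ge N$. Each ideal $(P_N)$ is $T$-adically closed by Lemma~\ref{lem:adic-basics}(i), since $P_N\bmod T=A_N\ne0$. Hence $P_N\mid f$ for every $N$, and \eqref{eq:power-principal-intersection} gives $F\mid f$ in $\Atilde$. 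The argument for $g$ is identical.

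For (ii), divisibility $\pi_{i,\zeta}\mid f$ follows because $a_i\zeta$ has positive valuation, so evaluation at $T=a_i\zeta$ is a continuous homomorphism $W_{N_i}[[T]]\to W_{N_i}$. Moreover $f_n(a_i\zeta)=0$ for all $n\ge i$, so $f(a_i\zeta)=0$, and a Weierstrass-type division by $T-a_i\zeta$ gives $\pi_{i,\zeta}\mid f$. For $g$, we have $g(a_i\zeta)=\lim_n g_n(a_i\zeta)$, and we must show this is nonzero. Since $\gcd(N_i,M_i)=1$ and $\zeta\ne1$, we get $\zeta^{M_i}\ne1$. It remains to compare the factors at $j\ne i$, where we need $(a_i\zeta)^{M_j}\ne a_j^{M_j}$. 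This holds because the $\alpha_j$ are strictly decreasing, so the valuations $v(a_i)=\alpha_i\ne\alpha_j=v(a_j)$ differ.

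A valuation estimate should then show that the limit is a nonzero product. Write
\[
g(a_i\zeta)=c_2\prod_j\bigl(1-(a_i\zeta/a_j)^{M_j}\bigr).
\]
For $j<i$ the factors are units. For $j>i$, the factor equals $-(a_i\zeta/a_j)^{M_j}\bigl(1-(a_j/a_i\zeta)^{M_j}\bigr)$, whose valuation is $M_j(\alpha_i-\alpha_j)$; this must be handled via $c_2$. Namely, $g(a_i\zeta)$ has valuation
\[
\beta_2+\sum_{j>i}M_j(\alpha_i-\alpha_j)\quad\text{plus the $j=i$ term}.
\]
This diverges, so the product tends to $0$. This is the main obstacle: one cannot evaluate at $a_i\zeta$ naively.

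I would instead argue by factorization. Write $g_n=h_{n,i}\cdot(\text{factor }i)$. Only the $i$-th factor $a_i^{M_i}-T^{M_i}$ can vanish at $a_i\zeta$, and there it is nonzero; the other factors $a_j^{M_j}-T^{M_j}$ are, for $j>i$, $T^{M_j}$ times a unit near $a_i\zeta$. The right statement is therefore $\pi_{i,\zeta}\nmid g$ in the ring $W[[T]]$. I would prove this by Weierstrass preparation over the spherically complete ring, reducing to the assertion that the distinguished part of $g$ has no root $a_i\zeta$. Where this is delicate I would follow \cite[Lemmas~6--7]{AndersonKangPark}.

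For $F$, I would show $F(a_i\zeta)\neq0$ by noting that $F=P_NY_N$, and that $P_N(a_i\zeta)$ is nonzero with controlled valuation. Then $\pi\mid G$ follows from $\pi\mid f=FG$, $\pi\nmid F$, and primality of $\pi_{i,\zeta}$, since $W_{N_i}[[T]]/(\pi_{i,\zeta})\cong W_{N_i}$ is a domain.

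For (iii), $Q_n\mid G$ follows from the convergent quotients: $f_m=P_nQ_n\,w_{m,n}$ for $m\ge n$. Passing to limits and using closedness of $(P_nQ_n)$ gives $P_nQ_n\mid f$. Then $F\mid f$ and $P_n\mid F$, and it remains to cancel $P_n$ in $f/F$. For this I would use that $Q_n$ and $P_n$ share no root $a_i\zeta$ together with (ii), so that $Q_n\mid G$. Equality $\bigcap_n(Q_n)=(G)$ follows from Corollary~\ref{cor:intersection-principalization} with $\pi=T$. The ideals $(Q_n)$ are $T$-saturated, $G\bmod T=c_1/c$, and $\bigcap_n\bigl(\prod_{i\le n}a_i^{N_i-1}\bigr)$ has valuation $\sum_i(N_i-1)\alpha_i=\beta_1-v(c)$. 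This matches $v(G(0))$, so the reductions coincide as required.
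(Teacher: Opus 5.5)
You correctly handle convergence, the constant terms, part (i), the divisibility $\pi_{i,\zeta}\mid f$, and the final identification $\bigcap_n(Q_n)=(G)$ via Corollary~\ref{cor:intersection-principalization}, and these agree with the paper. The gap is in (ii), at $\pi_{i,\zeta}\nmid g$, and it comes from a valuation error. Because $(\alpha_j)$ is strictly \emph{decreasing}, for $j>i$ one has $v(a_i\zeta/a_j)=\alpha_i-\alpha_j>0$. So $1-(a_i\zeta/a_j)^{M_j}$ is a principal unit of valuation $0$. Your rewriting also has valuation $0$, since the factor $1-(a_j/a_i\zeta)^{M_j}$ has valuation $-M_j(\alpha_i-\alpha_j)$. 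Moreover $M_j(\alpha_i-\alpha_j)\to\infty$, so the tail product and its inverse converge. The non-unit factors are the finitely many with $j<i$, of valuation $M_j(\alpha_i-\alpha_j)<0$, and $c_2$ absorbs them:
\[
v\bigl(g(a_i\zeta)\bigr)=\sum_{j\le i}M_j\alpha_i+\sum_{j>i}M_j\alpha_j+v\bigl(1-\zeta^{M_i}\bigr)<\infty .
\]
So naive evaluation works, and this is exactly how the paper proves $g(a_i\zeta)\neq0$. The ``main obstacle'' you describe does not exist.

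Your proposed replacement could not work anyway. Every coefficient of $g$ is a finite sum of terms $\pm c_2\prod_{j\in S}a_j^{-M_j}$ with $S$ finite, each of valuation $\sum_{j\notin S}M_j\alpha_j>0$. Hence $g$ vanishes modulo the maximal ideal and has no Weierstrass degree. Deferring to Anderson--Kang--Park does not supply the missing step.

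Your argument for $\pi_{i,\zeta}\nmid F$ is also incomplete: from $F=P_NY_N$ and $P_N(a_i\zeta)\neq0$, nothing prevents $Y_N(a_i\zeta)=0$. The correct input is $F\mid g$, which you already proved in (i). Once $\pi_{i,\zeta}\nmid g$ is known, $\pi_{i,\zeta}\nmid F$ follows, and primality gives $\pi_{i,\zeta}\mid G$. Part (iii) rests on this, so it inherits the gap. When you repair it, also make the following explicit:
\begin{enumerate}
\item All roots of $Q_n$ lie in a single finite extension with ring of integers $W_n$.
\item The primes $T-a_i\zeta$ occurring in $Q_n$ are pairwise nonassociate, since their roots are distinct by separability of $X^{N_i}-1$. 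Successive cancellation then yields $Q_n\mid G$ in $W_n[[T]]$.
\item Since $Q_n(0)\neq0$, coefficient recursion brings the quotient back to $\Ktilde[[T]]\cap W_n[[T]]=\Atilde$.
\end{enumerate}
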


\begin{proof}
For \textup{(i)}, Construction
Lemma~\ref{constr:valuation-data}\textup{(iii)} makes
the sums defining $\beta_1$ and $\beta_2$ finite. The valuations of the
two normalizing scalars are the nonnegative tail sums
$\sum_{i>n}N_i\alpha_i$ and $\sum_{i>n}M_i\alpha_i$. Thus both scalars
lie in $V$ and hence $f_n,g_n\in A$. In $K[[T]]$, the same polynomials
can be rewritten as
\begin{align*}
f_n(T)&=c_1\prod_{i=1}^n\left(1-(T/a_i)^{N_i}\right),\\
g_n(T)&=c_2\prod_{i=1}^n\left(1-(T/a_i)^{M_i}\right).
\end{align*}
The new factors on the right are congruent to $1$ modulo $T^{N_i}$ and
$T^{M_i}$, respectively. Since all partial products lie in $A$, this
proves the asserted convergence in $A$. Passing to the limit also gives
the corresponding normalized product expressions for $f$ and $g$ in
$K[[T]]$, which will be used below. For each fixed $N$, one has
$P_N\mid f_n,g_n$ for all sufficiently large $n$. Closedness then gives
$P_N\mid f,g$, and Proposition~\ref{prop:power-F} gives $F\mid f,g$.

For \textup{(ii)}, write $f=FG$ and fix $i$ and $\zeta_{N_i}$ as in the
statement. Put $z=a_i\zeta_{N_i}$. The element
$z$ belongs to $\mathfrak m_{W_{N_i}}$, so evaluation on
$W_{N_i}[[T]]$ is continuous with prime kernel $(T-z)$. Thus
$\pi_{i,\zeta_{N_i}}=T-z\mid f_n$ for every $n\geq i$, so closedness
gives $\pi_{i,\zeta_{N_i}}\mid f$ in $W_{N_i}[[T]]$. On the
other hand, continuity of
evaluation gives
\[
g(z)=c_2\prod_{j\geq1}\left(1-(z/a_j)^{M_j}\right)\neq0.
\]
Indeed, for $j>i$,
$v((z/a_j)^{M_j})=M_j(\alpha_i-\alpha_j)\to+\infty$, so the tail product
and its inverse converge. The earlier factors are nonzero by their
valuations, and the $i$-th is nonzero because
$\zeta_{N_i}^{M_i}\neq1$ by $\gcd(N_i,M_i)=1$.
Thus $\pi_{i,\zeta_{N_i}}\nmid g$, and hence
$\pi_{i,\zeta_{N_i}}\nmid F$. Since $\pi_{i,\zeta_{N_i}}$ is prime,
$f=FG$ gives $\pi_{i,\zeta_{N_i}}\mid G$.

For \textup{(iii)}, fix $n$ and let $L_n/\Ktilde$ be a finite splitting
field for $\prod_{i\leq n}(X^{N_i}-1)$, equipped with the extended
valuation. It is
complete \cite[Theorem~3.1.2 and Proposition~1.2.2]{EnglerPrestel}. Put
$W_n=\mathcal O_{L_n}$. Since each $N_i$ is prime to the residue
characteristic, $X^{N_i}-1$ is separable, so
\[
R_i(T)=u_i
\prod_{\substack{\zeta_{N_i}^{N_i}=1\\ \zeta_{N_i}\neq1}}
(T-a_i\zeta_{N_i})
\]
for a unit $u_i\in W_n^\times$. In particular, $Q_n$ is, up to a unit, a
product of the pairwise nonassociate prime elements
$T-a_i\zeta_{N_i}$, where $1\leq i\leq n$ and $\zeta_{N_i}$ is a
nontrivial $N_i$-th root of unity. For different indices $i$, their roots
have the distinct valuations $\alpha_i$; for fixed $i$, if two factors were
associates, evaluating an association at either root would imply equality
of their roots. Since the factorization is squarefree, successive prime
cancellation gives $Q_n\mid G$ in $W_n[[T]]$.

Writing $G=Q_nY$ there, coefficient recursion from $Q_n(0)\neq0$ gives
$Y\in\Ktilde[[T]]\cap W_n[[T]]=\Atilde$. Thus $Q_n\mid G$ in $\Atilde$.

Put $J=\bigcap_{n\geq1}(Q_n)$.
Each $(Q_n)$ is $T$-saturated, hence so is $J$. Modulo $T$, the ideal
$(Q_n)$ becomes the principal ideal generated by
\[
Q_n(0)=\prod_{i=1}^n a_i^{N_i-1},
\]
whose valuation is the increasing partial sum
$\sum_{i=1}^n(N_i-1)\alpha_i$. In the valuation domain $\Vtilde$, the
intersection of these principal ideals is therefore the principal ideal
whose generator has the limiting valuation
\[
\sum_{i\geq1}(N_i-1)\alpha_i=\beta_1-v(c).
\]
The constant term of $G$ is $c_1/c$, which has exactly this valuation.
Thus $\bigcap_n\overline{(Q_n)}=(G\bmod T)$, and
Corollary~\ref{cor:intersection-principalization} gives $J=(G)$.
\end{proof}

\begin{proposition}\label{prop:power-gcd}
The element $F$ is a greatest common divisor of $f$ and $g$ in
$\Atilde=\Vtilde[[T]]$, whereas $f$ and $g$ have no greatest common divisor
in $A=V[[T]]$.
\end{proposition}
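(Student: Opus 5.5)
The plan is to reduce the $\Atilde$-statement to a coprimality statement, using the fact that $\Atilde$ is already known to be a GCD domain. Then, for the $A$-statement, we show that any putative greatest common divisor in $A$ would be an associate of $F$ in $\Atilde$, which contradicts the missing coset \eqref{eq:missing-coset}.

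\emph{Step 1: $F$ is a gcd in $\Atilde$.} Since $\Ktilde$ is spherically complete with $v(\Ktilde^\times)=\RR$, Theorem~\ref{thm:coherence-lifting} applied to $(\Atilde,T)$ shows that $\Atilde$ is a GCD domain. By Proposition~\ref{prop:power-divisor-data}(i), write $f=FG$ and $g=FH$. It suffices to show that every common divisor $d$ of $G$ and $H$ in $\Atilde$ is a unit, since then $\gcd(f,g)=F\gcd(G,H)=F$.

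Fix $n$ and let $e=\gcd(d,Q_n)$ in $\Atilde$. Suppose $e$ is not a unit. Over $W_n[[T]]$, the element $Q_n$ is a monic polynomial whose lower coefficients lie in the maximal ideal, that is, a distinguished polynomial. It splits into the pairwise nonassociate primes $T-a_i\zeta_{N_i}$, as in the proof of Proposition~\ref{prop:power-divisor-data}(iii). By Weierstrass preparation, any nonunit divisor of $Q_n$ is a unit times a nonempty subproduct of these primes. Hence some $T-a_i\zeta_{N_i}$ divides $e$, and therefore divides $H$, and therefore divides $g$. This contradicts Proposition~\ref{prop:power-divisor-data}(ii). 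So $\gcd(d,Q_n)=1$.

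Since $d\mid G=Q_n\cdot(G/Q_n)$ and $d$ is coprime to $Q_n$ in a GCD domain, we get $d\mid G/Q_n$ for every $n$. The constant term of $G/Q_n$ has valuation equal to the tail $\sum_{i>n}(N_i-1)\alpha_i$, which tends to $0$ by Construction Lemma~\ref{constr:valuation-data}(iii). Therefore $v(d(0))=0$, so $d$ is a unit.

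\emph{Step 2: no gcd in $A$.} Suppose $D\in A$ is a gcd of $f$ and $g$ in $A$. Each $P_N$ lies in $A$ and divides $f_n$ and $g_n$ in $A$ for $n\geq N$. Since $(P_N)$ is closed in $A$ by Lemma~\ref{lem:adic-basics}(i), we get $P_N\mid f,g$ in $A$. Hence $P_N\mid D$ for all $N$. By \eqref{eq:power-principal-intersection}, this gives $D\in(F)$ in $\Atilde$. On the other hand, $D$ is a common divisor of $f$ and $g$ in $\Atilde$, so $D\mid F$ by Step 1. Thus $D=Fu$ with $u\in\Atilde^\times$.

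Now compare coefficients, using $F=c+b_1T+O(T^2)$. We have $D(0)=cu_0$ and $D_1=b_1u_0+cu_1$, where $u_0$ is a unit. Then
\[
\frac{cD_1}{D(0)}=b_1+c\,\frac{u_1}{u_0}\in b_1+c\Vtilde .
\]
The left side lies in $K$ because $c,D(0),D_1\in K$. This contradicts $(b_1+c\Vtilde)\cap K=\varnothing$.

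The main obstacle is the coprimality argument in Step 1. It needs Weierstrass preparation over $W_n[[T]]$ to control the divisors of $Q_n$, and a descent from $W_n[[T]]$ back to $\Atilde$, done as in the proof of Proposition~\ref{prop:power-divisor-data}(iii).
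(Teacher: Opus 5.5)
Your proof is correct. The second half (no gcd in $A$) is essentially the paper's argument; the only difference is that you read off $cD_1/D(0)\in K\cap(b_1+c\Vtilde)$ instead of normalizing $u(0)=1$.

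The first half takes a genuinely different route.

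\textbf{The paper's route.} It checks the universal property of $F$ directly and never uses that $\Atilde$ is a GCD domain. Take a common divisor $d$ with $f=dh$. No linear factor of $Q_n$ divides $g$, hence none divides $d$. Successive prime cancellation then gives $Q_n\mid h$ in $W_n[[T]]$, and coefficient recursion descends this to $\Atilde$. The identity $\bigcap_n(Q_n)=(G)$ from Proposition~\ref{prop:power-divisor-data}(iii) then gives $G\mid h$, hence $d\mid F$.

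\textbf{Your route.} You import the GCD property of $\Atilde$ from Theorem~\ref{thm:coherence-lifting}. This is legitimate: $\Ktilde$ is spherically complete with value group $\RR$, and there is no circularity. You then reduce to coprimality of $G$ and $g/F$. In place of the intersection identity you use Euclid's lemma together with $v\bigl((G/Q_n)(0)\bigr)=\sum_{i>n}(N_i-1)\alpha_i\to0$. So you need only $Q_n\mid G$ and $G(0)=c_1/c$, not the full equality $\bigcap_n(Q_n)=(G)$ and not the descent of $Q_n\mid h$.

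\textbf{Trade-off.} The paper's divisor-descent argument is self-contained within the negative direction and carries over verbatim to the Witt-vector case. Yours is shorter once Section~\ref{sec:kernel-lifting} is available, but it rests on the positive theorem for $\Atilde$.

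\textbf{One simplification.} You do not need Weierstrass preparation to show that a nonunit divisor $e$ of $Q_n$ is divisible by some $T-a_i\zeta_{N_i}$. These elements are primes in $W_n[[T]]$ and $Q_n$ is their product. If none of them divided $e$, successive cancellation would make $e$ divide a unit. This matters because the classical $\m$-adic Weierstrass theorem does not apply literally over the nondiscrete ring $W_n$, where $\m^2=\m$ and the $\m$-adic topology is not separated. You would have to invoke an $x$-adic version instead.
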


\begin{proof}
Let $d\mid f,g$, and write $f=dh$. Fix $n$, let
$L_n/\Ktilde$ be a finite splitting field for $Q_n$, and put
$W_n=\mathcal O_{L_n}$.
Proposition~\ref{prop:power-divisor-data} shows that every linear factor
$\lambda$ of $Q_n$ satisfies $\lambda\nmid g$, hence $\lambda\nmid d$.
Primality and successive cancellation give
$Q_n\mid h$ in $W_n[[T]]$. Writing $h=Q_nY$, the nonzero constant term of
$Q_n$ gives $Y\in\Ktilde[[T]]\cap W_n[[T]]=\Atilde$ by coefficient
recursion. Thus $h\in\bigcap_n(Q_n)=(G)$, say $h=Ge$, and cancellation in
$FG=f=dGe$ yields $F=de$.

Suppose now that $H\in A$ is a greatest common divisor. Then
$P_N\mid H$ for every $N$, so
$F\mid H$ by \eqref{eq:power-principal-intersection}, while
$H\mid F$ by the first assertion. Thus $H=uF$ for a unit
$u\in\Vtilde[[T]]^\times$.

From $H(0)=u(0)c\in K$ we obtain $u(0)\in V^\times$. After multiplying
$H$ by $u(0)^{-1}$, assume $u(0)=1$. Write
\[
H(T)=H(0)+H_1T+O(T^2),
\qquad
u(T)=1+u_1T+O(T^2).
\]
Since $F(T)=c+b_1T+O(T^2)$, comparison of linear coefficients in $H=uF$
gives $H_1=b_1+cu_1$. The left side belongs to $K$, while the right side lies in
$b_1+c\Vtilde$, contradicting the second identity in
\eqref{eq:missing-coset}.
\end{proof}

\begin{proof}[Proof of Theorem~\ref{thm:negative}\textup{(i)}]
Let $L=(f)\cap(g)$. Since $f(0),g(0)\neq0$,
Lemma~\ref{lem:adic-basics}\textup{(i)} shows that $L$ is $T$-saturated.
If it were finitely generated, Lemma~\ref{lem:adic-basics}\textup{(ii)}
would give $L=(h)$ for some $h\in A$. Write $fg=hd$. Since
$h\in(f)\cap(g)$, cancellation shows that $d\mid f,g$.
If $e$ is any common divisor, write $f=ef'$ and $g=eg'$. Then
$fg'=gf'\in(f)\cap(g)=(h)$, say $fg'=ht$. Comparing
$fg=hd$ with $fg=eht$ gives $d=et$. Thus $d$ is a greatest common
divisor, contradicting Proposition~\ref{prop:power-gcd}.
\end{proof}

\section{Witt vectors}\label{sec:witt}

Assume that $K$ is perfect of characteristic $p$. For any $x\in\Ktilde$,
adjoining a $p$-th root of $x$ gives an immediate extension because the
value group of $\Ktilde$ is divisible and its residue field is perfect.
Maximality therefore implies that $\Ktilde$ is perfect. Let
\[
A=\W(V),
\qquad
\Atilde=\W(\Vtilde).
\]
These rings are domains, and every element of $\Atilde$ can be uniquely written as
$\sum_{n\geq0}p^n[x_n]$; we call $x_n$ the $n$-th Teichm\"uller coordinate.

We follow the same divisor-descent pattern of
\cite[Lemmas~6--7]{AndersonKangPark}, using the data $(a_i)$ and
$(A_N,s_N)$ from Construction Lemma~\ref{constr:valuation-data} to define
the Witt-vector factors
\[
\xi_i=[a_i]-p,
\qquad
P_N=\prod_{i=1}^N\xi_i.
\]
Propositions~\ref{prop:witt-F} and~\ref{prop:witt-divisor-data}
construct $F\in\Atilde$ and $f,g\in A$, and
Proposition~\ref{prop:witt-gcd} proves that $F$ is their greatest common
divisor over $\Atilde$, whereas no greatest common divisor exists over $A$.

\begin{proposition}\label{prop:witt-F}
For the elements $c,b_1$ in
Construction Lemma~\ref{constr:valuation-data}, there
exists $F\in\Atilde$ such that
\[
F\in\bigcap_{N\geq1}(P_N),
\qquad
F=[c]+p[b_1]+O(p^2).
\]
Moreover,
\begin{equation}\label{eq:witt-principal-intersection}
\bigcap_{N\geq1}(P_N)=(F).
\end{equation}
\end{proposition}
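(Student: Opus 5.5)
We mirror the proof of Proposition~\ref{prop:power-F}, with $(\Atilde,p)$ in place of $(\Vtilde[[T]],T)$. Since $\Vtilde$ is perfect, $\Atilde$ is $p$-adically complete and separated, $p$ is a nonzerodivisor, and $\Atilde/p=\Vtilde$. Lemma~\ref{lem:adic-basics} and Corollary~\ref{cor:intersection-principalization} therefore apply with $\pi=p$. The plan has three steps: construct the class of $F$ modulo $p^2$, extend it level by level using spherical completeness, and then identify the intersection.

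\emph{Level two.} We first compute $P_N$ modulo $p^2$ in Teichm\"uller coordinates. Modulo $p^2$ one has
\[
P_N\equiv[A_N]-p\sum_{i=1}^N\Bigl[\prod_{j\ne i}a_j\Bigr]
\equiv [A_N]+p[A_Ns_N/c],
\]
because the $p$-coordinate of a sum $[x]+p[y]+p[z]$ is additive modulo $p^2$ up to terms in $p^2$. Here $\prod_{j\ne i}a_j=A_N/a_i$, so the sum is $-A_Ns_N/c$. We use the fact that modulo $p^2$ the $p$-coordinates of $p[y]+p[z]$ add to $y+z$, and that $[x][y]=[xy]$.

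Now let $Y=[y_0]+p[y_1]$. Then
\[
P_NY\equiv[A_Ny_0]+p\bigl[A_Ny_1+y_0A_Ns_N/c\bigr].
\]
The mixed terms $p[x][y]$ combine additively modulo $p^2$, since the first-order Witt addition is just addition of $p$-coordinates once the $0$-th coordinates are fixed. Requiring $A_Ny_0=c$ forces $y_0=c/A_N$. The first coordinate then ranges over $s_N+A_N\Vtilde$. This coset contains $b_1$ for every $N$ by Construction Lemma~\ref{constr:valuation-data}, so $F_2=[c]+p[b_1]$ is divisible by every $P_N$ in $\Atilde/p^2$.

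The one genuinely new point is the Witt carry. We must check that the sum of the $p$-multiples of Teichm\"ullers has $p$-coordinate exactly $\sum$ modulo $p^2$. This holds because $p[x]+p[y]=p([x]+[y])$ and $[x]+[y]\equiv[x+y]\pmod p$. That is enough.

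\emph{Inductive step.} Suppose $F_r\in\Atilde/p^r$ is divisible by every $P_N$. We write $F_r=P_NY_{N,r}$, where $Y_{N,r}$ is unique modulo $p^r$: $P_N\bmod p=[A_N]\ne0$, so Lemma~\ref{lem:adic-basics}(i) gives that multiplication by $P_N$ is injective modulo $p^r$. The extensions of $F_r$ to level $r+1$ differ by $p^r[t]$, and these are parametrized by the $r$-th coordinate modulo $p^{r+1}$. Changing the lift $Y$ by $p^r[z]$ changes $P_NY$ by $p^r[A_Nz]$ modulo $p^{r+1}$. So the admissible $r$-th coordinates of extensions divisible by $P_N$ form a nonempty affine coset with direction $A_N\Vtilde$.

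To identify this set as an affine coset, we fix one lift $\tilde F$ of $F_r$ and write the extensions as $\tilde F+p^r[t]$. Modulo $p^{r+1}$, the map $t\mapsto\tilde F+p^r[t]$ is additive in $t$. These cosets decrease in $N$ because $P_N\mid P_{N+1}$. Spherical completeness of $\Ktilde$ (Lemma~\ref{lem:affine-compactness} with $M=\Vtilde$) gives a common $t$, which defines $F_{r+1}$. Taking the limit gives $F\in\Atilde$. For each $N$, the quotients $Y_{N,r}$ are compatible by uniqueness, so $P_N\mid F$.

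\emph{The intersection.} Each $(P_N)$ is $p$-saturated by Lemma~\ref{lem:adic-basics}(i), and modulo $p$ it equals $A_N\Vtilde$. We have $\bigcap_NA_N\Vtilde=c\Vtilde=(F\bmod p)$ because $v(A_N)\uparrow v(c)$. Corollary~\ref{cor:intersection-principalization} then yields $\bigcap_N(P_N)=(F)$.

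The main obstacle is the bookkeeping of Witt carries in the two coefficient computations. The key fact is that for $r\ge1$, the map $\Vtilde\to p^r\Atilde/p^{r+1}\Atilde$, $t\mapsto p^r[t]$, is an additive isomorphism, and it is compatible with multiplication by the reduction of $P_N$ modulo $p$.
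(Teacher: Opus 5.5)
Your proposal is correct and follows the paper's proof essentially step for step. It uses the same mod-$p^2$ computation showing that the first Teichm\"uller coordinate of $P_NY$ ranges over $s_N+A_N\Vtilde$, the same level-by-level extension through nested cosets with direction $A_N\Vtilde$ and spherical completeness, and the same final appeal to Corollary~\ref{cor:intersection-principalization}. The only difference is cosmetic: you expand $\prod_i([a_i]-p)$ directly in $\Atilde$ instead of factoring out $[A_N]$ in $\W(K)$. Also, $P_N\bmod p$ should read $A_N$, not $[A_N]$.
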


\begin{proof}
We first construct the required class modulo $p^2$. For each $N$,
$P_N\bmod p=A_N$, and
\[
P_N=[A_N]\prod_{i=1}^N(1-p[a_i^{-1}])
\equiv[A_N]\left(1-p\sum_{i=1}^N[a_i^{-1}]\right)\pmod{p^2}.
\]
If the zeroth coordinate of $P_NY$ is $c$, then that of $Y$ is $c/A_N$,
while the displayed congruence shows that the first coordinate of $P_NY$
ranges over $s_N+A_N\Vtilde$.
By Construction Lemma~\ref{constr:valuation-data}, $b_1$ belongs to this
coset for every $N$, so $P_N\mid F_2=[c]+p[b_1]\in\Atilde/p^2$ for every
$N$. We now
construct compatible classes
\[
F_r\in\Atilde/p^r,
\qquad r\geq2,
\]
with the same property.

Suppose that $F_r$ has been constructed, and fix $N$. There is a unique
$y_{N,r}\in\Atilde/p^r$ such that $F_r=P_Ny_{N,r}$ by
Lemma~\ref{lem:adic-basics}\textup{(i)}. Choose a lift
$\widetilde y_{N,r}\in\Atilde/p^{r+1}$. All lifts of $y_{N,r}$ are
\[
\widetilde y_{N,r}+p^r[z],
\qquad z\in\Vtilde.
\]
Their products with $P_N$ are precisely the lifts of $F_r$ that remain in
$(P_N)$, and their $r$-th Teichm\"uller coordinates form an affine coset
$C_{N,r}$ with direction $A_N\Vtilde$, because $P_N\bmod p=A_N$. Since
$P_N\mid P_{N+1}$,
\[
C_{N+1,r}\subseteq C_{N,r}.
\]
Spherical completeness supplies a point of $\bigcap_NC_{N,r}$ and hence
$F_{r+1}$. The resulting system defines $F\in\Atilde$.

For fixed $N$, the unique classes $y_{N,r}$ are compatible; their inverse
limit $Y_N$ satisfies $F=P_NY_N$.

Let $I=\bigcap_N(P_N)$. It is $p$-saturated. Since
$\bigcap_N\overline{(P_N)}=\bigcap_NA_N\Vtilde
=c\Vtilde=(F\bmod p)$,
Corollary~\ref{cor:intersection-principalization} gives $I=(F)$.
\end{proof}

We now construct $f,g$; the $P_N$ record their common factors and the
$Q_N$ below the remaining multiplicities in $f/F$.

\begin{proposition}\label{prop:witt-divisor-data}
Choose an integer $q\geq2$ relatively prime to $p$, and put
\[
\beta_p=\sum_{i\geq1}p^i\alpha_i<\infty,\qquad
\beta_q=\sum_{i\geq1}q^i\alpha_i<\infty,
\]
and choose $c_p,c_q\in K$ with $v(c_p)=\beta_p$ and
$v(c_q)=\beta_q$. For $N\geq1$, form the elements
\begin{align*}
f_N&\coloneqq
\left[\frac{c_p}{\prod_{i=1}^Na_i^{p^i}}\right]
\prod_{i=1}^N([a_i]-p)^{p^i}\in A,\\
g_N&\coloneqq
\left[\frac{c_q}{\prod_{i=1}^Na_i^{q^i}}\right]
\prod_{i=1}^N\bigl([a_i]^{q^i}-p^{q^i}\bigr)\in A.
\end{align*}
The sequences $(f_N)$ and $(g_N)$ converge $p$-adically in $A$; denote
their limits by $f,g\in A$.
\begin{enumerate}[label=\textup{(\roman*)},leftmargin=2em,itemsep=2pt,topsep=3pt]
\item The reductions of $f$ and $g$ modulo $p$ are nonzero, and
$F\mid f,g$ in $\Atilde$.
\item Writing $f=FG$, for every $i\geq1$ one has
\[
\xi_i^{p^i}\mid f,\qquad
\xi_i\mid g,\quad \xi_i^2\nmid g,\qquad
\xi_i\mid F,\quad \xi_i^2\nmid F,
\qquad
\xi_i^{p^i-1}\mid G.
\]
\item For $Q_N\coloneqq\prod_{i=1}^N\xi_i^{p^i-1}$, one has
\[
Q_N\mid G\quad(N\geq1),
\qquad
\bigcap_{N\geq1}(Q_N)=(G).
\]
\end{enumerate}
\end{proposition}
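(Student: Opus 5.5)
The plan is to copy the power-series argument of Proposition~\ref{prop:power-divisor-data}, with evaluation at $T=z$ replaced by the untilt maps $\theta_i$ attached to the degree-one elements $\xi_i=[a_i]-p$. Throughout, write $s_N^{(p)}=c_p/\prod_{i\le N}a_i^{p^i}$ and $s_N^{(q)}=c_q/\prod_{i\le N}a_i^{q^i}$ for the normalizing scalars.

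\textbf{Convergence.} By Construction Lemma~\ref{constr:valuation-data}\textup{(iii)}, $\beta_p$ and $\beta_q$ are finite. The valuations of $s_N^{(p)}$ and $s_N^{(q)}$ are the tail sums $\sum_{i>N}p^i\alpha_i$ and $\sum_{i>N}q^i\alpha_i$, which are $\ge0$, so $f_N,g_N\in A$. Since $s_N^{(p)}=s_{N+1}^{(p)}a_{N+1}^{p^{N+1}}$,
\[
f_{N+1}-f_N=\bigl[s_{N+1}^{(p)}\bigr]\prod_{i\le N}\xi_i^{p^i}\,\bigl(\xi_{N+1}^{p^{N+1}}-[a_{N+1}]^{p^{N+1}}\bigr).
\]
Expanding by the binomial theorem, the term with $p^k$, $k\ge1$, has coefficient $\binom{p^{N+1}}{k}p^k$. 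Its $p$-adic valuation is $N+1-v_p(k)+k\ge N+2$, so $f_{N+1}-f_N\in p^{N+2}A$. The analogous difference for $g$ equals $-[s_{N+1}^{(q)}]\bigl(\prod_{i\le N}([a_i]^{q^i}-p^{q^i})\bigr)\,p^{q^{N+1}}$. Hence both sequences are $p$-adically Cauchy.

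\textbf{Part (i).} Modulo $p$, $f_N\equiv s_N^{(p)}\prod_{i\le N}a_i^{p^i}=c_p\neq0$, and likewise $g\equiv c_q\neq0$. Fix $M$. For $N\ge M$, $P_M$ divides $f_N$ trivially, and it divides $g_N$ because $\xi_i\mid[a_i]^{q^i}-p^{q^i}$. Since $P_M\bmod p=A_M\neq0$, Lemma~\ref{lem:adic-basics}\textup{(i)} shows $(P_M)$ is $p$-adically closed, so $P_M\mid f,g$. Then \eqref{eq:witt-principal-intersection} gives $F\mid f,g$.

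\textbf{Part (ii).} I would use Fontaine's map $\theta_i\colon\Atilde\to\Atilde/(\xi_i)=\mathcal O^\sharp_i$. Because $\Vtilde$ is perfect and complete (spherically complete implies complete) and $\xi_i$ is primitive of degree one, $\mathcal O^\sharp_i$ is a complete valuation ring of characteristic $0$. In it $\theta_i([x])=x^\sharp$ with $v(x^\sharp)=v(x)$, and $p=a_i^\sharp$ has valuation $\alpha_i$. In particular $\xi_i$ is prime. The assertion $\xi_i^{p^i}\mid f$ follows by closedness, exactly as in (i). For $g$, factor
\[
g_N=\xi_i h_N\quad(N\ge i),\qquad [a_i]^{q^i}-p^{q^i}=\xi_i\sum_{r<q^i}[a_i]^{q^i-1-r}p^r .
\]
The $h_N$ converge, by Lemma~\ref{lem:adic-basics}\textup{(i)}, to some $h$ with $g=\xi_ih$. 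Applying $\theta_i$, the $i$-th cofactor maps to $q^i(a_i^\sharp)^{q^i-1}\neq0$, since $\operatorname{char}\mathcal O_i^\sharp=0$. For $j\neq i$, the factors rewrite in the fraction field as $1-(a_i^\sharp/a_j^\sharp)^{q^j}$. These are nonzero because $\alpha_j\ne\alpha_i$, and for $j>i$ they are $\equiv1$ modulo elements of valuation $q^j(\alpha_i-\alpha_j)\to\infty$, so the product converges to a nonzero limit. Hence $\theta_i(h)\ne0$, i.e.\ $\xi_i^2\nmid g$. Next, $\xi_i\mid P_i\mid F$, and $\xi_i^2\nmid F$ because $F\mid g$. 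Finally, $\xi_i^{p^i}\mid FG$ together with primality and $\xi_i^2\nmid F$ gives $\xi_i^{p^i-1}\mid G$.

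\textbf{Part (iii).} For $i\neq j$, $\theta_i(\xi_j)=a_j^\sharp-a_i^\sharp\neq0$ since $\alpha_j\neq\alpha_i$, so the $\xi_i$ are pairwise nonassociate primes. Successive cancellation in the domain $\Atilde$ then gives $Q_N\mid G$. Each $(Q_N)$ is $p$-saturated by Lemma~\ref{lem:adic-basics}\textup{(i)}, since $Q_N\bmod p=\prod_{i\le N}a_i^{p^i-1}\neq0$. These reductions have valuations increasing to $\sum_i(p^i-1)\alpha_i=\beta_p-v(c)$, so in $\Vtilde$ one has $\bigcap_N\overline{(Q_N)}$ equal to the ideal of that valuation. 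This is exactly the valuation of $G\bmod p=c_p/c$. Corollary~\ref{cor:intersection-principalization} then yields $\bigcap_N(Q_N)=(G)$.

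The main obstacle is (ii). It requires setting up $\theta_i$ carefully: identifying $\ker\theta_i=(\xi_i)$ and computing valuations in the untilt. It also requires checking convergence and nonvanishing of $\theta_i(h)$ as an infinite product in $\operatorname{Frac}\mathcal O_i^\sharp$. Here continuity of $\theta_i$ lets me pass to the limit in $h_N$. I would first try to mimic the computation of $g(z)$ in Proposition~\ref{prop:power-divisor-data}\textup{(ii)} verbatim, with $z$ replaced by $p=a_i^\sharp$.
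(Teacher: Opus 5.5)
Your proposal is correct and follows essentially the paper's route: closedness of $(P_M)$ and $(\xi_i^{p^i})$ for the divisibilities, the quotient $\Atilde/(\xi_i)$ as a valuation ring with $p$ of valuation $\alpha_i$ to get $\xi_i^2\nmid g$ and the non-associativity of the $\xi_i$, prime cancellation for $\xi_i^{p^i-1}\mid G$ and $Q_N\mid G$, and Corollary~\ref{cor:intersection-principalization} for $\bigcap_N(Q_N)=(G)$. The differences are cosmetic. Your direct binomial estimate $f_{N+1}-f_N\in p^{N+2}A$ replaces the paper's rewriting in $\W(K)$, and your convergent-product evaluation of $\theta_i(g/\xi_i)$ replaces the paper's constant-valuation limit argument, which is the same computation in another form.
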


\begin{proof}
For \textup{(i)}, the sums defining $\beta_p$ and $\beta_q$ are finite by
Construction Lemma~\ref{constr:valuation-data}\textup{(iii)}. The two
quotients occurring in the normalizing Teichm\"uller factors have the
nonnegative valuations
$\sum_{i>N}p^i\alpha_i$ and $\sum_{i>N}q^i\alpha_i$. They therefore lie
in $V$, which proves directly that $f_N,g_N\in A$. In $\W(K)$, the same
elements can be rewritten as
\begin{align*}
f_N&=[c_p]\prod_{i=1}^N(1-p[a_i^{-1}])^{p^i},\\
g_N&=[c_q]\prod_{i=1}^N
\left(1-p^{q^i}[a_i^{-q^i}]\right).
\end{align*}

The elementary congruence
$(1-px)^{p^i}\equiv1\pmod{p^{i+1}}$ holds in every commutative ring.
The $i$-th new factor of $g_N$ is congruent to $1$ modulo $p^{q^i}$.
Since all partial products lie in $A$ and
$A\cap p^r\W(K)=p^rA$, both sequences are Cauchy in $A$ and converge to
the elements $f$ and $g$ specified in the statement.

For fixed $N$, one has $P_N\mid f_M,g_M$ for all sufficiently large $M$.
Since $(P_N)$ is closed, one has
$P_N\mid f,g$ and hence $F\mid f,g$ by
Proposition~\ref{prop:witt-F}.
Moreover, $f\bmod p=c_p$ and $g\bmod p=c_q$ are nonzero.

For \textup{(ii)}, write $f=FG$. For fixed $i$, one has
$\xi_i^{p^i}\mid f_N$ whenever $N\geq i$; closedness of
$(\xi_i^{p^i})$ gives $\xi_i^{p^i}\mid f$.

For each $i$, equip $\Ktilde$ with the equivalent multiplicative norm
$\lvert x\rvert_i=p^{-v(x)/\alpha_i}$, so that
$\lvert a_i\rvert_i=p^{-1}$. Then $\xi_i=[a_i]-p$ is primitive of degree one
\cite[Definition~3.3.4]{KedlayaLiuFoundations}. Kedlaya--Liu
\cite[Theorem~3.3.7(a) and Example~3.3.8]{KedlayaLiuFoundations},
using Kedlaya
\cite[Theorem~5.11(a) and Remark~5.14]{KedlayaWittGeometry},
show that the quotient
$\mathcal B_i\coloneqq\Atilde/(\xi_i)$ is a valuation domain
carrying a multiplicative norm $\lvert\cdot\rvert_{\mathcal B_i}$ such that
$\lvert\theta_i([x])\rvert_{\mathcal B_i}=\lvert x\rvert_i$ for
$x\in\Vtilde$, where $\theta_i\colon\Atilde\to\mathcal B_i$ is the
quotient map. In particular, $(\xi_i)$ is prime. The additive valuation
$w_i=-\alpha_i\log_p\lvert\cdot\rvert_{\mathcal B_i}$ satisfies
\[
w_i(\theta_i([x]))=v(x)\quad(x\in\Vtilde),
\qquad
w_i(p)=v(a_i)=\alpha_i,
\]
since $p=\theta_i([a_i])$. These primes are pairwise nonassociate: if
$\xi_i$ and $\xi_j$ were associates, reduction by their common ideal would
give $[a_i]=p=[a_j]$, hence $\alpha_i=\alpha_j$ by the displayed valuation
formula. Since the $\alpha_i$ are strictly decreasing, $i=j$.

We next prove the stated multiplicities. Fix $i$ and put $m_j=q^j$. For
$N\geq i$, divide $g_N$ by $\xi_i$. Continuity of division shows that the
quotients form a $p$-adic Cauchy sequence and converge to $g/\xi_i$ in
$\Atilde$. Reduce them modulo $\xi_i$. Since
\[
\frac{[a_i]^{m_i}-p^{m_i}}{[a_i]-p}
=\sum_{r=0}^{m_i-1}[a_i]^{m_i-1-r}p^r,
\]
its image in $\mathcal B_i$ is the nonzero element $m_i[a_i]^{m_i-1}$, since
$m_i$ is prime to $p$. For $j<i$,
\[
w_i([a_j]^{m_j}-p^{m_j})=m_j\alpha_i,
\]
whereas for $j>i$ one has
\[
w_i([a_j]^{m_j}-p^{m_j})=m_j\alpha_j.
\]
The normalizing scalar has valuation $\sum_{j>N}m_j\alpha_j$. Hence the image of $g_N/\xi_i$ in $\mathcal B_i$ has the finite valuation
\[
\lambda_i=
\left(m_i-1+\sum_{j<i}m_j\right)\alpha_i
+\sum_{j>i}m_j\alpha_j,
\]
independent of $N\geq i$. The images converge in $\mathcal B_i$, and their
differences eventually have valuation at least $r\alpha_i$ for every $r$.
Taking $r\alpha_i>\lambda_i$ shows that the limit still has valuation
$\lambda_i$. Hence $\xi_i^2\nmid g$.

Since $F\mid g$ and $\xi_i\mid F$ while $\xi_i^2\nmid g$, write
$F=\xi_iF_i$ with $\xi_i\nmid F_i$. Cancelling the factor $\xi_i$
in the domain $\Atilde$ and repeatedly using the primality of $\xi_i$ gives
\[
\xi_i^{p^i-1}\mid G.
\]

For \textup{(iii)}, if $i\neq j$, primality and nonassociation give
$\xi_i\nmid\xi_j$. The same prime-power cancellation, applied successively
to the finitely many factors, therefore gives for every $N$
$Q_N\mid G$.
Put $J=\bigcap_{N\geq1}(Q_N)$.

Each $(Q_N)$ is $p$-saturated, hence so is $J$.
Modulo $p$, the ideal $(Q_N)$
maps to the principal ideal generated by
$\prod_{i\leq N}a_i^{p^i-1}$. Their intersection is generated by an element of
valuation
\[
\sum_{i\geq1}(p^i-1)\alpha_i=\beta_p-v(c).
\]
On the other hand, $G\bmod p=c_p/c$ has exactly this valuation.
Thus $\bigcap_N\overline{(Q_N)}=(G\bmod p)$, and
Corollary~\ref{cor:intersection-principalization} gives $J=(G)$.
\end{proof}

\begin{proposition}\label{prop:witt-gcd}
The element $F$ is a greatest common divisor of $f$ and $g$ in $\Atilde$,
whereas $f$ and $g$ have no greatest common divisor in $A=\W(V)$.
\end{proposition}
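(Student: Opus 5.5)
We argue exactly as in the proof of Proposition~\ref{prop:power-gcd}. The only change is that the linear factors $T-a_i\zeta$ over finite extensions are replaced by the primes $\xi_i=[a_i]-p$ of $\Atilde$, together with the multiplicity information in Proposition~\ref{prop:witt-divisor-data}\textup{(ii)}. The argument has two parts: first $F$ is a greatest common divisor over $\Atilde$, and then any greatest common divisor over $A$ would contradict \eqref{eq:missing-coset}.

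\emph{Step 1: $F$ is a greatest common divisor over $\Atilde$.} By Proposition~\ref{prop:witt-divisor-data}\textup{(i)}, $F$ is a common divisor of $f$ and $g$. Let $d\in\Atilde$ be any common divisor, and write $f=dh$. Fix $i$. Since $d\mid g$ and $\xi_i^2\nmid g$, the prime $\xi_i$ divides $d$ at most once. On the other hand $\xi_i^{p^i}\mid f=dh$. Cancelling primes, as in the proof of Proposition~\ref{prop:witt-divisor-data}, gives $\xi_i^{p^i-1}\mid h$. The $\xi_i$ are pairwise nonassociate primes, so successive cancellation over the finitely many indices $i\le N$ yields $Q_N\mid h$ for every $N$. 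Hence
\[
h\in\bigcap_N(Q_N)=(G),
\]
say $h=Ge$. Then $FG=f=dGe$, and cancelling $G$ in the domain $\Atilde$ gives $F=de$. The only point needing care is the multiplicity bookkeeping. The bound $\xi_i^2\nmid g$ is exactly what forces $h$ to absorb $p^i-1$ copies of $\xi_i$, and that bound has already been established.

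\emph{Step 2: no greatest common divisor exists over $A$.} Suppose $H\in A$ were one. Each $P_N\in A$ divides $f$ and $g$ in $A$, because $P_N\mid f_M,g_M$ for all large $M$ and $(P_N)\cap A$ is closed. I would check this directly: $P_N$ has nonzero reduction $A_N$, so Lemma~\ref{lem:adic-basics}\textup{(i)} applies in $A$. Therefore $P_N\mid H$ in $A$, and \eqref{eq:witt-principal-intersection} gives $F\mid H$ in $\Atilde$. Conversely $H$ is a common divisor in $\Atilde$, so Step~1 gives $H\mid F$. Thus $H=uF$ with $u\in\Atilde^\times$.

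To compare Teichm\"uller coordinates, reduce modulo $p$: $H\bmod p=u_0c$ with $u_0\in\Vtilde^\times$, and this lies in $V$. Since $c\in K$, we get $u_0\in V^\times$. Replacing $H$ by $[u_0^{-1}]H\in A$, we may assume $u\equiv 1\pmod p$, so $u=1+p[u_1]+O(p^2)$. Now compute modulo $p^2$ in $\W(\Vtilde)/p^2$:
\[
(1+p[u_1])([c]+p[b_1])\equiv[c]+p\bigl([b_1]+[cu_1]\bigr).
\]
Modulo $p^2$ the first Teichm\"uller coordinate is additive on $p$-multiples, since $p[x]+p[y]\equiv p[x+y]\pmod{p^2}$ in characteristic $p$ perfect Witt vectors. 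So the first coordinate of $H$ is $b_1+cu_1$. The left side lies in $V\subseteq K$, while $b_1+cu_1\in b_1+c\Vtilde$, contradicting the second identity in \eqref{eq:missing-coset}.

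I expect this Witt-vector coordinate computation to be the main obstacle. It requires verifying the congruence $p[x]+p[y]\equiv p[x+y]$ modulo $p^2$, which follows from the Witt addition polynomials. It also requires that multiplication by $p$ shifts coordinates, with Frobenius twists that are harmless here.
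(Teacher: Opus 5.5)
Your proposal is correct and follows essentially the same route as the paper: the bound $\xi_i^2\nmid g$ combined with $\xi_i^{p^i}\mid f$ forces $\xi_i^{p^i-1}\mid h$, hence $Q_N\mid h$, $h\in(G)$ and $F=de$; then $H=uF$, and comparing first Teichm\"uller coordinates contradicts the missing coset $b_1+c\Vtilde$. You also make explicit that $P_N$ divides $f$ and $g$ already in $A$, via closedness of $P_NA$ from Lemma~\ref{lem:adic-basics}\textup{(i)}; the paper uses this implicitly when it writes ``Then $P_N\mid H$ for every $N$.''
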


\begin{proof}
Let $d\mid f,g$, and write $f=dh$. Since $\xi_i^2\nmid g$,
either $\xi_i\nmid d$ or $d=\xi_id_i$ with $\xi_i\nmid d_i$.
Together with $\xi_i^{p^i}\mid f$, primality gives
$\xi_i^{p^i-1}\mid h$ in either case.
The $\xi_i$ are pairwise nonassociate, so $Q_N\mid h$ for every $N$.
Thus $h\in\bigcap_N(Q_N)=(G)$, say $h=Ge$, and cancellation in
$FG=f=dGe$ gives $F=de$.

Suppose now that $H\in A$ is a greatest common divisor. Then
$P_N\mid H$ for every $N$, so $F\mid H$ by
\eqref{eq:witt-principal-intersection}. Since $F$ is a greatest common
divisor in $\Atilde$, also $H\mid F$; hence $H=uF$ for some
$u\in(\Atilde)^\times$.

Write the first two Teichm\"uller coordinates of $H$ and $u$ as
\[
H=[H_0]+p[H_1]+O(p^2),
\qquad
u=[u_0]+p[u_1]+O(p^2),
\]
where $H_0,H_1\in V$ and $u_0,u_1\in\Vtilde$. Comparison of zeroth
coordinates gives $H_0=u_0c\in K$, so
$u_0\in V^\times$. After multiplying $H$ by $[u_0^{-1}]\in A^\times$,
and replacing $u$ accordingly, assume $u_0=1$. Modulo $p^2$,
\[
uF\equiv[c]+p\bigl([b_1]+[cu_1]\bigr)
\equiv[c]+p[b_1+cu_1]\pmod{p^2}.
\]
Here $p([x]+[y])\equiv p[x+y]\pmod{p^2}$, so
$H_1=b_1+cu_1$. But $H_1\in V\subseteq K$, while the right side belongs
to the missing coset $b_1+c\Vtilde$, contradicting the second identity in
\eqref{eq:missing-coset}.
\end{proof}

\begin{proof}[Proof of Theorem~\ref{thm:negative}\textup{(ii)}]
Let $L=(f)\cap(g)$. Since $f$ and $g$ have nonzero reductions,
Lemma~\ref{lem:adic-basics}\textup{(i)} shows that $(f)$ and $(g)$, and
hence $L$, are $p$-saturated. If $L$ were finitely generated,
Lemma~\ref{lem:adic-basics}\textup{(ii)} would make it principal; the
argument in the proof of
Theorem~\ref{thm:negative}\textup{(i)} would then give a greatest common
divisor, contradicting Proposition~\ref{prop:witt-gcd}.
\end{proof}

\begin{proof}[Proof of Theorem~\ref{thm:classification}]
The positive and negative implications are
Theorems~\ref{thm:coherence-lifting} and~\ref{thm:negative}, respectively.
\end{proof}

\section*{Acknowledgements}

The author thanks his advisor, Heng Du, for his guidance and helpful
discussions. The project was partially supported by the National Key R\&D
Program of China No.~2023YFA1009703, the National Natural Science Foundation
of China No.~12275066, and the Beijing Natural Science Foundation under Grant
No.~1254044.

\bibliographystyle{amsplain}
\bibliography{references}

\end{document}